\def\r{\mathbb R}
\newtheorem{theorem}{Theorem}[section]
\newtheorem{lemma}[theorem]{Lemma} 
\title{Minimum principles and a priori estimates for some translating soliton type problems}
\author{Rafael L\'opez\\
Departamento de Geometr\'{\i}a y Topolog\'{\i}a\\
 Instituto de Matem\'aticas (IEMath-GR)\\
 Universidad de Granada\\
 18071 Granada, Spain\\
\texttt{rcamino@ugr.es}
\and
Cristian Enache\\
American University of Sharjah\\
 Department of Mathematics and Statistics\\
  University City Road, P.O. Box 26666, Sharjah, UAE\\
\texttt{cenache@aus.edu}
}
\date{}
\begin{document}
\maketitle

 \begin{abstract}
In this paper we are dealing with two classes of mean curvature type problems that generalize the translating soliton problem. A first result proves that the solutions to these problems have unique interior critical points. Using this uniqueness result, we next derive a priori $C^0$ and $C^1$  estimates for the solutions to these problems, by means of some minimum principles for appropriate $P$-functions, in the sense of L.E. Payne.
\end{abstract}

{\it Keywords:} critical points, nodal lines, maximum principles, a priori estimates\\
{\it MSC 2010:} 35B38, 35J93, 53A10, 35J60\\

%%%%%%%%%%%%%%%%%
\section{Introduction and statement of results}\label{se:1}
%%%%%%%%%%%%%%%%

This paper is devoted to the study of the following two general classes of mean curvature type problems:

  \begin{empheq}[left = \empheqlbrace]{align}
               &\mbox{div}\left( \dfrac{\nabla u}{\sqrt{1+| \nabla u| ^{2}}}
\right) =\left( \dfrac{1}{\sqrt{1+| \nabla u| ^{2}}}
\right) ^{\alpha }\text{ in }\Omega ,\label{eq:1.1} \\
&u=0\ \text{ on }\partial \Omega,\label{eq:1.2}
\end{empheq}

 \begin{empheq}[left = \empheqlbrace]{align}
               &\mbox{div}\left( \dfrac{\nabla v}{\sqrt{1+| \nabla v| ^{2}}}
\right) =\dfrac{1 }{\sqrt{1+| \nabla v| ^{2}}}+\mu
\ \text{ in }\Omega ,  \label{eq:1.3} \\
&v=0 \ \text{ on }\partial \Omega , \label{eq:1.4}
\end{empheq}
where $\Omega \subset \mathbb{R}^{2}$ is a bounded strictly convex domain with
smooth boundary $\partial\Omega$, while $\alpha, \mu>0 $ are some positive constants. We note that if $\kappa=\kappa(s)$ denotes the curvature of $\partial\Omega$ as a planar curve in $\r^2$ computed with respect to the inward orientation, then the strictly convexity of $\Omega$ is equivalent to $\kappa>0$ on $\partial\Omega$.

The motivation for considering these problems has its origin in the singularity theory of the mean curvature flow in $\r^3$ of Huisken and Ilmanen \cite{hs,il,wh}. A {\it translating soliton} is a surface $\Sigma\subset\r^3$ that is  a solution of the mean curvature flow when $\Sigma$ evolves purely by translations along some direction $\vec{a}\in\r^3\setminus\{0\}$. For the initial surface $\Sigma$ in the flow, this implies that $2H=\langle N,\vec{a}\rangle$, where $N$ is a choice of unit normal field. After a change of coordinates, we suppose $\vec{a}=(0,0,1)$. If $\Sigma$ is locally the graph $z=u(x,y)$,   then the identity $2H=\langle N,\vec{a}\rangle$ is now rewritten as
\begin{equation}
\mbox{div}\left( \frac{\nabla u}{\sqrt{1+| \nabla u| ^{2}}}
\right) =  \frac{1}{\sqrt{1+| \nabla u| ^{2}}}.\label{eq:1.5}
\end{equation}
Equation (\ref{eq:1.5}) is called the {\it translating soliton equation} (see Lopez \cite{lo2} for a historical introduction of this equation). Therefore, equations (\ref{eq:1.1}) and (\ref{eq:1.3}) generalize (\ref{eq:1.5}), by taking $\alpha=1$ in (\ref{eq:1.1}) and   $\mu=0$ in (\ref{eq:1.3}), respectively. However, both equations (\ref{eq:1.1})  and (\ref{eq:1.3}) have their own interest. Equation (\ref{eq:1.1}) has been considered in \cite{sh1,sh2,sw} as the extension of the flow of surfaces by powers of the mean curvature $H$. On the other hand, equation (\ref{eq:1.3}) is better understood when we see a solution of (\ref{eq:1.5}) in the  context of manifolds with density (\cite{gr,mo}). More precisely, let $e^\varphi$ be a positive density function in $\r^3$, with $\varphi\in C^\infty(\r^3)$, which serves as a weight for the volume and the surface area. For a given variation $\Sigma_t$ of $\Sigma$, let us denote by $A_\varphi(t)$ and $V_\varphi(t)$ the weighted area and the enclosed weighted volume of $\Sigma_t$, respectively. Then the expressions of the first variation of $A_\varphi(t)$ and $V_\varphi(t)$ are
\begin{equation}
A'_\varphi(0)=-2\int_\Sigma H_\varphi \langle N,\xi\rangle\  dA_\varphi,\quad V_\varphi'(0)=\int_\Sigma \langle N,\xi\rangle\ dA_\varphi, \label{eq:1.6}
\end{equation}
where $\xi$ is the   variational vector field of $\Sigma_t$ and $H_\varphi=H-\langle\nabla\phi,N\rangle/2$ is the so-called weighted mean curvature. If we choose   $\varphi(q)=\langle q,\vec{a}\rangle$, where $\vec{a}=(0,0,1)$, then $H_\varphi=H-\langle N,\vec{a}\rangle/2$. As a consequence of the Lagrange multipliers, we conclude that $\Sigma$ is a critical point of the  area $A_\varphi$ for a  prescribed weighted volume if and only if  $H_\varphi$ is identically constant,  $H_\varphi=\mu/2$: this equation coincides  with (\ref{eq:1.3}) when $\Sigma$ is the graph of $z=u(x,y)$. For $\mu\not=0$, the solutions of equation (\ref{eq:1.3})  invariant by a uniparametric family of rigid motions have been classified by the second author in \cite{lo1}. In a general context, there is a great interest of the solvability of   the mean curvature equation (\ref{eq:1.3}) by replacing the constant $\mu$ by a `forcing term'  $f=f(u,Du)$ (see \cite{ber,jl,ma,ss}).

In this paper we will not study the existence of solutions to problems (\ref{eq:1.1})-(\ref{eq:1.2}) and (\ref{eq:1.3})-(\ref{eq:1.4}). Sufficient conditions on the data for the existence of classical solutions are known in the above bibliography, or more generally, in  the classical article of Serrin \cite{Se69}. Here we are rather interested in obtaining estimates for the solutions to both Dirichlet problems. To this end, we will extensively use the theory of maximum principles developed by L.E. Payne and G.A Philippin in \cite{PP79} for quasilinear elliptic equations of divergence type (see also the book of R. Sperb \cite{Sp81} and the references therein). We will thus develop some new minimum principles for two so-called $P$-functions in the sense of L.E. Payne, that is, for two appropriate functional combinations of the solutions and their derivatives. More precisely, let us consider the following $P$-functions:
\begin{equation}
\Phi( \mathbf{x};\beta) =\frac{2}{\alpha -1}\left(
1+| \nabla u| ^{2}\right)^{\frac{\alpha -1}{2}}-\beta u,\label{eq:1.7}
\end{equation}
\begin{equation}
\Psi ( \mathbf{x};\beta) =\ln \left( \frac{
1+| \nabla v| ^{2}}{\left( 1 +\mu \sqrt{
1+| \nabla v| ^{2}}\right) ^{2}}\right) -\beta v,\label{eq:1.8}
\end{equation}
with $\beta\in\r$, where $u$ and $v$ are solutions to problems (\ref{eq:1.1})-(\ref{eq:1.2}) and (\ref{eq:1.3})-(\ref{eq:1.4}), respectively. Moreover, let us also assume that $\alpha\not=1$, since the case $\alpha=1$, namely,  the  translating soliton equation (\ref{eq:1.5}),  was already investigated by Barbu and Enache \cite{BE13}. The main result of this paper is the following minimum principle.

\begin{theorem}\label{t1} If $\beta \in  [1,2]$, then the auxiliary functions $\Phi ( \mathbf{x};\beta) $  and $\Psi( \mathbf{x};
\beta) $  attain their minimum values on the boundary $\partial \Omega$.
\end{theorem}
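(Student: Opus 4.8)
The plan is to show that each $P$-function is a supersolution of a suitable elliptic operator naturally associated with the mean curvature operator, and then to invoke the minimum principle. Write $W:=\sqrt{1+|\nabla u|^{2}}$ and recall that the left-hand side of (\ref{eq:1.1}) equals $W^{-1}\big(\delta_{ij}-u_iu_j/W^{2}\big)u_{ij}$, so that (\ref{eq:1.1}) may be recast as $\big(\delta_{ij}-u_iu_j/W^{2}\big)u_{ij}=W^{1-\alpha}$; similarly (\ref{eq:1.3}) becomes $\big(\delta_{ij}-v_iv_j/W^{2}\big)v_{ij}=1+\mu W$. The coefficient matrix $a_{ij}:=\delta_{ij}-u_iu_j/W^{2}$ is positive definite, with eigenvalues $1$ and $W^{-2}$, so the operator $\mathcal{L}w:=a_{ij}w_{ij}+b_iw_i$ is elliptic for any first-order coefficients $b_i$ to be fixed later. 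Since $\mathcal{L}$ carries no zeroth-order term, establishing $\mathcal{L}\Phi\le 0$ (resp. $\mathcal{L}\Psi\le 0$) will force $\Phi$ (resp. $\Psi$) to attain its minimum on $\partial\Omega$.

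First I would compute the derivatives of $\Phi=\tfrac{2}{\alpha-1}W^{\alpha-1}-\beta u$. Differentiating gives $\Phi_i=2W^{\alpha-3}u_ku_{ki}-\beta u_i$, and a second differentiation expresses $\Phi_{ij}$ through the Hessian $u_{ij}$, products of the form $u_ku_{ki}u_lu_{lj}$, and the third derivatives $u_{kij}$. Feeding these into $a_{ij}\Phi_{ij}$ and using the differentiated form of (\ref{eq:1.1}) to eliminate the third-order contributions and the trace $a_{ij}u_{ij}=W^{1-\alpha}$, I would then choose the vector $b_i$ so as to cancel the residual gradient terms, precisely along the lines of the scheme of Payne and Philippin \cite{PP79}. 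The same computation applies to $\Psi=2\ln W-2\ln(1+\mu W)-\beta v$, where the logarithmic combination is engineered so that the awkward factor $1+\mu W$ coming from (\ref{eq:1.3}) is absorbed.

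To read off the sign I would fix a point and choose principal coordinates so that $\nabla u$ points along the first axis, i.e. $u_1=|\nabla u|$ and $u_2=0$; this diagonalizes $a_{ij}$ and reduces $\mathcal{L}\Phi$ to a quadratic expression in the independent Hessian entries $u_{11},u_{12},u_{22}$, subject to the single scalar constraint $a_{ij}u_{ij}=W^{1-\alpha}$ inherited from the PDE. Solving the constraint for one entry and substituting, the claim $\mathcal{L}\Phi\le 0$ becomes an inequality for a quadratic form in the remaining entries, whose coefficients depend on $|\nabla u|$, $\alpha$ and $\beta$. I expect the decisive step, and the main obstacle, to be showing that this form is negative semidefinite exactly when $\beta\in[1,2]$: the bounds $\beta\ge1$ and $\beta\le2$ should each control the sign of a distinct coefficient (equivalently, the non-negativity of a discriminant), and it is here that the planar setting $n=2$ is essential, since only one free Hessian direction survives once the constraint is imposed.

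Finally, having established $\mathcal{L}\Phi\le0$ (and $\mathcal{L}\Psi\le0$) throughout the region where $\nabla u\neq 0$, the minimum principle yields that $\Phi$ and $\Psi$ attain their minima on $\partial\Omega$. The only delicate point is the interior critical point of $u$, where $\nabla u=0$ and the coefficients $b_i$ may degenerate; here I would invoke the earlier uniqueness result, which guarantees a single interior critical point, to excise a small disc around it, apply the minimum principle on the punctured domain, and then recover the conclusion on all of $\Omega$ by letting the radius tend to zero and using the continuity of $\Phi$ and $\Psi$.
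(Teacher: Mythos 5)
Your first three paragraphs reproduce, in spirit, exactly the computation the paper performs (following Payne--Philippin): both you and the authors derive an elliptic inequality of the form $a_{ij}\Phi_{ij}+W_k\Phi_k\le 0$ for $\beta\in[1,2]$, with drift coefficients $W_k$ that blow up at the critical point. The genuine gap is in your final paragraph. Excising a disc $B_\rho$ around the unique critical point $\mathbf{O}$ and applying the minimum principle on $\Omega\setminus\overline{B_\rho}$ gives only
\[
\min_{\overline{\Omega}}\Phi \;=\; \min\Bigl(\min_{\partial\Omega}\Phi,\ \min_{\partial B_\rho}\Phi\Bigr),
\]
and letting $\rho\to 0$ turns the second term into $\Phi(\mathbf{O})$. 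So your limiting argument proves precisely that the minimum is attained \emph{either on $\partial\Omega$ or at the critical point} --- which is the paper's Lemma 2.3 (and Lemma 3.1), not Theorem 1.1. Continuity of $\Phi$ cannot exclude the second alternative, and no soft argument can: the inequality $\mathcal{L}\Phi\le 0$ genuinely fails across $\mathbf{O}$ because the drift is singular there, and supersolutions of operators with a drift singular like $1/|x|$ at an interior point can perfectly well attain their minimum at that point (e.g.\ $\Phi=|x|^2$ satisfies $\Delta\Phi - C x_k\Phi_k/|x|^2\le 0$ for $C\ge 2$).

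What is missing is the entire second half of the paper's proof, which rules out the critical point as minimum location and is where the real work lies. For $\beta\in(1,2]$ the paper argues by contradiction: if the minimum of $\Phi(\cdot;\beta)$ were at $\mathbf{O}$, then $\Phi_{11}(\mathbf{O};\beta)\ge 0$ and $\Phi_{22}(\mathbf{O};\beta)\ge 0$; a direct evaluation gives $\Phi_{11}(\mathbf{O};\beta)=u_{11}(\mathbf{O})\bigl(2u_{11}(\mathbf{O})-\beta\bigr)$ and similarly for the other entry, and one needs the strict inequalities $u_{11}(\mathbf{O})>0$, $u_{22}(\mathbf{O})>0$ (the paper's Lemma 2.2, itself proved by the nodal-line machinery behind the uniqueness theorem) to divide out the factors and conclude $\Delta u(\mathbf{O})\ge\beta$. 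Evaluating the PDE at the critical point gives $\Delta u(\mathbf{O})=1$, whence $\beta\le 1$, contradicting $\beta>1$. The endpoint case $\beta=1$ then needs a separate continuity-in-$\beta$ argument (the minimum points cannot jump from $\partial\Omega$ to $\mathbf{O}$ as $\beta$ decreases to $1$). Note also that the uniqueness of the critical point enters not where you place it (to excise a disc --- any finite set of isolated critical points could be excised equally well), but to guarantee that $\mathbf{O}$ is the global minimum of $u$ so the Hessian test applies, and inside the proof that $u_{11}(\mathbf{O})$ and $u_{22}(\mathbf{O})$ are strictly positive.
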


As a consequence of these minimum principles for $\Phi ( \mathbf{x};\beta) $ and $\Psi ( \mathbf{x};\beta)$, we derive the following a priori estimates.

\begin{theorem} \label{t2} If $u$ is a solution of problem (\ref{eq:1.1})-(\ref{eq:1.2}), with $\alpha\not=1$, then we have the following lower bound estimates:
\begin{equation}
q_{\min }\geq \left(\kappa_{\max }\right)^{-\frac{2}{\alpha+1 }},  \label{eq:1.9}
\end{equation}
\begin{equation}
-u_{\min }\geq \frac{2}{\alpha -1}\left( \left( \frac{1}{\kappa_{\max }}\right) ^{
\frac{\alpha -1}{\alpha +1}}-1\right) ,   \label{eq:1.10}
\end{equation}
where $q_{\min }=\underset{\partial \Omega }{\min }\left\vert \nabla
u\right\vert $, $u_{\min }=\underset{\overline{\Omega }}{\min\ }u$ and $\kappa_{\max }=\max_{\partial\Omega}\kappa$.
\end{theorem}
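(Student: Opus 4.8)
The plan is to use the minimum principle of Theorem~\ref{t1} in two ways: a boundary‑point (Hopf) analysis for \eqref{eq:1.9}, and an evaluation at the interior critical point of $u$ for \eqref{eq:1.10}. First I would fix the sign of $u$. If $u$ had an interior maximum, then at that point $\nabla u=0$ and $\mathrm{Hess}\,u\le 0$, so the left‑hand side of \eqref{eq:1.1} equals $\Delta u\le 0$, contradicting the strictly positive right‑hand side $(1+|\nabla u|^{2})^{-\alpha/2}$. Hence $u$ has no interior maximum and, since $u=0$ on $\partial\Omega$, we get $u<0$ in $\Omega$ and $u$ attains its minimum at an interior point $\mathbf{x}_{0}$, where $\nabla u(\mathbf{x}_{0})=0$ and $u(\mathbf{x}_{0})=u_{\min}$; by the uniqueness result of the paper this is the unique interior critical point. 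Since $u\equiv 0$ on $\partial\Omega$, the restriction of the $P$‑function there is $\Phi(\mathbf{x};\beta)=\tfrac{2}{\alpha-1}(1+|\nabla u|^{2})^{(\alpha-1)/2}$, and $\tfrac{d}{dq}\bigl[\tfrac{2}{\alpha-1}(1+q^{2})^{(\alpha-1)/2}\bigr]=2q(1+q^{2})^{(\alpha-3)/2}>0$ for every $\alpha\neq 1$, so $\Phi|_{\partial\Omega}$ is a strictly increasing function of $|\nabla u|$. Consequently the boundary minimum of $\Phi$ guaranteed by Theorem~\ref{t1} is attained at a point $\mathbf{x}^{*}\in\partial\Omega$ with $|\nabla u(\mathbf{x}^{*})|=q_{\min}$.

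To reach \eqref{eq:1.9} I would apply the Hopf boundary‑point lemma at $\mathbf{x}^{*}$ to the elliptic differential inequality underlying Theorem~\ref{t1}: the inward normal derivative satisfies $\partial\Phi/\partial n(\mathbf{x}^{*})\ge 0$. The decisive computation is to express this derivative through $q_{\min}$ and the boundary curvature alone. For this I would use two identities valid at every point of $\partial\Omega$: differentiating $u\equiv 0$ twice along the boundary gives $u_{ss}=-\kappa\,u_{n}$ (arclength $s$, inward normal $n$, with $u_{n}=-q<0$), while restricting \eqref{eq:1.1} to $\partial\Omega$ and using $\nabla u=u_{n}n$ yields $u_{nn}=(1+q^{2})^{(3-\alpha)/2}-\kappa\,q\,(1+q^{2})$. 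Substituting these into $\nabla\Phi=2(1+|\nabla u|^{2})^{(\alpha-3)/2}(\mathrm{Hess}\,u)\nabla u-\beta\nabla u$ collapses the normal derivative to the clean form $\partial\Phi/\partial n=-q\bigl(2-\beta-2\kappa\,q\,(1+q^{2})^{(\alpha-1)/2}\bigr)$. Imposing the Hopf sign at $\mathbf{x}^{*}$, and using $\kappa(\mathbf{x}^{*})\le\kappa_{\max}$, gives $2\kappa_{\max}\,q_{\min}(1+q_{\min}^{2})^{(\alpha-1)/2}\ge 2-\beta$ for all admissible $\beta$; taking $\beta=1$ gives $2\kappa_{\max}\,q_{\min}(1+q_{\min}^{2})^{(\alpha-1)/2}\ge 1$. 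I expect the \emph{main obstacle} to be twofold: carrying out the boundary reduction of the fully nonlinear operator with correct signs, and then inverting this implicit relation into the explicit power law \eqref{eq:1.9} — isolating $q_{\min}$ from a mixed expression of the type $q(1+q^{2})^{(\alpha-1)/2}$ is the delicate algebraic point where the exponent $2/(\alpha+1)$ must be produced.

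Finally, \eqref{eq:1.10} follows by using Theorem~\ref{t1} quantitatively, once \eqref{eq:1.9} is in hand. Evaluating $\Phi(\cdot;1)$ at the interior minimum $\mathbf{x}_{0}$, where $\nabla u=0$ and $u=u_{\min}$, gives $\Phi(\mathbf{x}_{0};1)=\tfrac{2}{\alpha-1}-u_{\min}$, while the previous paragraph identifies $\min_{\partial\Omega}\Phi(\cdot;1)=\tfrac{2}{\alpha-1}(1+q_{\min}^{2})^{(\alpha-1)/2}$. Since Theorem~\ref{t1} places the global minimum of $\Phi$ on $\partial\Omega$, we obtain $-u_{\min}\ge\tfrac{2}{\alpha-1}\bigl((1+q_{\min}^{2})^{(\alpha-1)/2}-1\bigr)$. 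Inserting \eqref{eq:1.9} gives $q_{\min}^{2}\ge\kappa_{\max}^{-4/(\alpha+1)}$, whence $1+q_{\min}^{2}\ge 1+\kappa_{\max}^{-4/(\alpha+1)}\ge\kappa_{\max}^{-2/(\alpha+1)}$ (the last step because $1+t^{2}\ge t$ for every $t\in\r$); applying the increasing map $x\mapsto x^{(\alpha-1)/2}$ then upgrades the previous bound to exactly \eqref{eq:1.10}. The case $\alpha<1$ is handled identically, the relevant monotonicities reversing in a way consistent with the sign of the factor $\tfrac{2}{\alpha-1}$.
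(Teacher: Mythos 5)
Your strategy is exactly the one the paper follows in Section 4, and everything you actually compute is correct: Theorem \ref{t1} puts the minimum of $\Phi$ on $\partial\Omega$; since $\Phi|_{\partial\Omega}=\tfrac{2}{\alpha-1}(1+|\nabla u|^{2})^{(\alpha-1)/2}$ is increasing in $|\nabla u|$ for every $\alpha\neq 1$, that minimum sits where $|\nabla u|=q_{\min}$; your two boundary identities are the paper's \eqref{eq:4.7}; your normal-derivative formula $\partial\Phi/\partial n=-q\bigl(2-\beta-2\kappa\,q\,(1+q^{2})^{(\alpha-1)/2}\bigr)$ reproduces, at $\beta=1$, the paper's inequality $1\le 2\kappa(\mathbf{Q})\,q_{\min}(1+q_{\min}^{2})^{(\alpha-1)/2}$; and your evaluation of $\Phi(\cdot\,;1)$ at the interior critical point is the paper's \eqref{eq:4.3}. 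The problem is that you stop there: you never produce \eqref{eq:1.9}, explicitly deferring ``the delicate algebraic point where the exponent $2/(\alpha+1)$ must be produced,'' and your entire derivation of \eqref{eq:1.10} is conditional on that unproven bound. As it stands, the proposal proves neither \eqref{eq:1.9} nor \eqref{eq:1.10}.

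The step the paper uses to bridge this gap is the elementary inequality $2q\le 1+q^{2}$, which converts your boundary inequality into
\[
1\;\le\; 2\kappa(\mathbf{Q})\,q_{\min}\,(1+q_{\min}^{2})^{\frac{\alpha-1}{2}}\;\le\;\kappa_{\max}\,(1+q_{\min}^{2})^{\frac{\alpha+1}{2}},
\qquad\text{i.e.}\qquad 1+q_{\min}^{2}\;\ge\;\kappa_{\max}^{-\frac{2}{\alpha+1}},
\]
and this lower bound on $1+q_{\min}^{2}$ (not \eqref{eq:1.9}) is what actually feeds \eqref{eq:1.10}: raising it to the power $(\alpha-1)/2$ and multiplying by $2/(\alpha-1)$ — the two monotonicity reversals cancelling when $\alpha<1$, as you noted — turns \eqref{eq:4.3} into \eqref{eq:1.10} with no detour through \eqref{eq:1.9}. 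Your instinct that the inversion to \eqref{eq:1.9} is the delicate point is in fact well founded, because that step cannot be done: the boundary analysis bounds $1+q_{\min}^{2}$ from below, and $1+q^{2}\ge t$ can never formally yield $q\ge t$ (that would require $t-1\ge t^{2}$, i.e. $t^{2}-t+1\le 0$, which has no real solutions). Even the paper's own passage from \eqref{eq:4.10} to \eqref{eq:1.9} is a non sequitur, and \eqref{eq:1.9} as stated fails on examples: on the unit disk ($\kappa_{\max}=1$) it asserts $q_{\min}\ge 1$, whereas the radial solution at $\alpha=0$ has $q_{\min}=1/\sqrt{3}$, so by continuity in $\alpha$ the bound is violated for small $\alpha>0$, while $1+q_{\min}^{2}\ge 1$ holds trivially. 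So the concrete repair of your write-up is: insert $2q\le 1+q^{2}$, state and use the bound on $1+q_{\min}^{2}$, and deduce \eqref{eq:1.10} from it; do not route through \eqref{eq:1.9}, which this method (and, apparently, any method) cannot deliver.
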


\begin{theorem}\label{t3}
If $v$ is a solution of problem (\ref{eq:1.3})-(\ref{eq:1.4}), then we have the following lower bound estimates:
\begin{equation}
q_{\min }\geq \frac{1+\mu }{2\kappa_{\max }},  \label{eq:1.11}
\end{equation}
\begin{equation}
-v_{\min }\geq 2\ln \left( \frac{\left( 1+\mu \right)\sqrt{1+\frac{\left( 1+\mu \right) ^{2}}{%
4\kappa_{\max }^{2}}} }{1+\mu \sqrt{1+\frac{\left( 1+\mu
\right) ^{2}}{4\kappa_{\max }^{2}}}}\right) ,  \label{eq:1.12}
\end{equation}
where $q_{\min }=\underset{\partial \Omega }{\min }\left\vert \nabla
v\right\vert $, $v_{\min }=\underset{\overline{\Omega }}{\min\ }v$ and $\kappa_{\max }=\max_{\partial\Omega}\kappa$.

\end{theorem}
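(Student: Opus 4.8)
The plan is to deploy the minimum principle of Theorem~\ref{t1} for the single choice $\beta=1$, which lies in $[1,2]$. First I would record two elementary facts about the geometry of $v$. Testing equation (\ref{eq:1.3}) at a would-be interior maximum of $v$, where $\nabla v=0$ and the operator reduces to $\Delta v\le 0$ while the right-hand side equals $1+\mu>0$, shows that $v$ can have no interior maximum; hence $\max_{\overline\Omega}v=0$ and $v<0$ in $\Omega$. Consequently the outward normal derivative satisfies $v_n=|\nabla v|=q>0$ on $\partial\Omega$ by Hopf's lemma, and since $v\equiv0$ there the tangential derivative vanishes, so on the boundary $\Psi(\mathbf{x};1)=g(q)$ with $g(q)=\ln\bigl((1+q^2)/(1+\mu\sqrt{1+q^2})^2\bigr)$. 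A direct computation gives $g'(q)=2q/[W^2(1+\mu W)]>0$ with $W=\sqrt{1+q^2}$, so $g$ is strictly increasing; therefore the boundary minimum of $\Psi$ guaranteed by Theorem~\ref{t1} is attained at a point $P\in\partial\Omega$ with $q(P)=q_{\min}$.

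Next I would prove the gradient bound (\ref{eq:1.11}) by a boundary-point argument at $P$. Differentiating the relation $v\equiv0$ along $\partial\Omega$ twice and using that the curvature vector of the strictly convex boundary points inward with magnitude $\kappa$, I obtain the tangential Hessian identity $v_{tt}=\kappa q$. Inserting the decomposition $\Delta v=v_{tt}+v_{nn}=\kappa q+v_{nn}$ together with $\nabla v\cdot(\mathrm{Hess}\,v)\nabla v=q^2 v_{nn}$ into (\ref{eq:1.3}) and simplifying yields the boundary value $v_{nn}=W^2(1+\mu W-\kappa q)$. With $\partial_n(|\nabla v|^2)=2q\,v_{nn}$ this lets me evaluate the outward normal derivative of the $P$-function, and, after the $\mu$-dependent terms telescope, I expect to reach
\[
\frac{\partial\Psi}{\partial n}(P)=(2-\beta)q-\frac{2\kappa q^2}{1+\mu W}.
\]
Because $\Psi$ attains its minimum over $\overline\Omega$ at $P\in\partial\Omega$, Hopf's boundary point lemma applied to the elliptic inequality satisfied by $\Psi$ (the same inequality underlying Theorem~\ref{t1}) gives $\partial_n\Psi(P)\le0$. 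Setting $\beta=1$ and dividing by $q>0$ produces $q_{\min}\ge (1+\mu W_{\min})/(2\kappa(P))$, and the estimates $W_{\min}\ge1$ and $\kappa(P)\le\kappa_{\max}$ finish (\ref{eq:1.11}).

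Finally, for the $C^0$ bound (\ref{eq:1.12}) I would evaluate the minimum principle at the interior point $\mathbf{x}_0$ where $v$ attains $v_{\min}$; there $\nabla v=0$, so $\Psi(\mathbf{x}_0;1)=-2\ln(1+\mu)-v_{\min}$. Combining $\Psi(\mathbf{x}_0;1)\ge\min_{\partial\Omega}\Psi=g(q_{\min})$ and rearranging gives
\[
-v_{\min}\ge 2\ln\!\left(\frac{(1+\mu)\sqrt{1+q_{\min}^2}}{1+\mu\sqrt{1+q_{\min}^2}}\right)=:2h(q_{\min}).
\]
A computation analogous to the one for $g$ shows $h'(q)=q/[W^2(1+\mu W)]>0$, so $h$ is increasing; substituting the lower bound $q_{\min}\ge(1+\mu)/(2\kappa_{\max})$ from (\ref{eq:1.11}) then yields exactly (\ref{eq:1.12}). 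The main obstacle is the second paragraph: carrying out the boundary computation of $\partial_n\Psi$ correctly (the curvature relation, the elimination of $v_{nn}$ via the equation, and the telescoping of the $\mu$-dependent terms) and confirming that the differential inequality behind Theorem~\ref{t1} legitimately supports Hopf's lemma at $P$.
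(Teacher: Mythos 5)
Your proposal is correct and follows essentially the same route as the paper's Section 5: the minimum principle of Theorem \ref{t1} with $\beta=1$, the first-order condition $\partial\Psi/\partial\mathbf{n}\leq 0$ at the boundary minimum point combined with equation \eqref{eq:1.3} written in normal/tangential coordinates (your identity $v_{nn}=(1+q^2)(1+\mu\sqrt{1+q^2}-\kappa q)$ is exactly the paper's \eqref{eq:5.5}) to obtain \eqref{eq:1.11}, and then evaluation of $\Psi$ at the interior critical point together with a monotonicity argument to obtain \eqref{eq:1.12}. The only cosmetic differences are that you invoke Hopf's lemma where the elementary first-order condition at a boundary minimum already suffices (and is all the paper uses), and that you verify explicitly the monotonicity of $g$ identifying the boundary minimum point of $\Psi$ with the point where $|\nabla v|=q_{\min}$, a point the paper leaves implicit.
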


Minimum principles for appropriate $P$-functions, similar to our   results, have been  obtained for several problems of physical or geometrical interests: \cite{E15,E14,M99,M00,PP77, P79,PP12,PS04}. While the corresponding maximum principles are usually easier to obtain as, for instance, in the reference paper   \cite{PP79}, the minimum principles usually require some additional properties of solutions, such as the convexity of the level curves of the solutions or the the uniqueness of their critical points. For the two problems of this paper,  the convexity of level sets of the solutions is still unknown. However, we are able to show that the solutions have an unique critical point, which allows us to adapt a technique employed in \cite{P79} for the case $\alpha =0$ in (\ref{eq:1.1}), and in \cite{BE13} for the case $\mu =0$ in (\ref{eq:1.3}).

The paper is organized as   follows. In Sections 2 and 3 we will give the proof of Theorem 1.1 for problems (\ref{eq:1.1})-(\ref{eq:1.2}) and (\ref{eq:1.3})-(\ref{eq:1.4}), respectively,  together with some preliminary results. In Section 4 we will apply Theorems 1.1 to derive the lower bound estimates from  Theorem \ref{t2}, while in Section 5 we will apply again Theorem \ref{t1}, to obtain the lower bound estimates from  Theorem \ref{t3}. Finally, in Sections 6, we show that some maximum principles developed by Payne and Philippin in \cite{PP79} can be also employed to obtain upper bound estimates which complement the results from Theorems \ref{t2} and \ref{t3} (see Theorems \ref{t4} and \ref{t5}).

%%%%%%%%%%%%%%%%%%%%
\section{Proof of Theorem \ref{t1} for the problem (\ref{eq:1.1})-(\ref{eq:1.2})}\label{se:2}
%%%%%%%%%%%%%%%%%
For the proof of Theorem \ref{t1}, we need first to investigate the number of critical points of the solution to problem (\ref{eq:1.1})-(\ref{eq:1.2}). We point out that  the study of the number critical points  of solutions for elliptic problems is a subject of high interest and the literature is very extensive: here we only refer \cite{sa} in the context of the constant mean curvature equation.

Since the right hand-side of (\ref{eq:1.1}) is positive, the strong  maximum principle implies that $u<0$ in $\Omega$, so $u$ attains its minimum at some interior point of $\Omega$. The next result has its own interest and proves that, in fact, there is only one interior critical point for the solution to problem (\ref{eq:1.1})-(\ref{eq:1.2}).

\begin{theorem}\label{t12} The solution $u$ of problem (\ref{eq:1.1})-(\ref{eq:1.2}) has only one critical point in $\Omega $.
\end{theorem}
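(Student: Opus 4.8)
The plan is to combine a nodal-line analysis of the first derivatives of $u$ with a rotation/degree argument. First I record the basic facts: since the right-hand side of \eqref{eq:1.1} is positive, the strong maximum principle gives $u<0$ in $\Omega$, and Hopf's lemma gives $\partial u/\partial\nu>0$ on $\partial\Omega$; as $u$ is constant on $\partial\Omega$ its gradient is purely normal there, $\nabla u=(\partial u/\partial\nu)\,\nu\neq 0$, so every critical point is interior. Because the nonlinearity is analytic and elliptic, $u$ is real-analytic, hence its critical points are isolated, and the interior minimum guarantees at least one. It thus remains to prove there is at most one.

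For a unit vector $e=(\cos\theta,\sin\theta)$ set $w=u_e:=e\cdot\nabla u$. Differentiating \eqref{eq:1.1} in the direction $e$ shows that $w$ solves a linear uniformly elliptic equation $\mathcal{L}w:=a_{ij}(\nabla u)\,w_{ij}+b_k\,w_k=0$ with no zeroth-order term, the same operator for every $e$. Hence $w$ obeys the strong maximum principle and, by the Hartman--Wintner theory, its nodal set in $\Omega$ is a locally finite union of analytic arcs with no interior endpoints and only even interior branching order. On $\partial\Omega$ one has $w=(\partial u/\partial\nu)(e\cdot\nu)$, and strict convexity makes $\nu$ turn monotonically through $2\pi$, so $e\cdot\nu$ changes sign exactly twice; thus $w$ has exactly two simple boundary zeros. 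A closed nodal loop is impossible (it would bound a region on which $w\equiv0$), so the nodal set is acyclic; a tree with exactly two boundary leaves and no odd interior vertex is a single simple arc $\gamma_e$ crossing $\Omega$ with no interior branch point. Consequently every interior zero of $u_e$ is simple, i.e. $\nabla u_e=D^2u\,e\neq0$ there. Applying this for all $e$ at a critical point $x_0$ shows $D^2u(x_0)$ is nonsingular; together with $\Delta u(x_0)=1$ (evaluate \eqref{eq:1.1} where $\nabla u=0$), every critical point is nondegenerate and is not a local maximum.

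Suppose, for contradiction, that there are two distinct critical points $x_0\neq x_1$. For every $e$ both lie on the simple arc $\gamma_e$, and since $u_{-e}=-u_e$ we have $\gamma_{-e}=\gamma_e$; letting $e=e(\theta)$ run over $\theta\in[0,\pi]$, the sub-arc $\sigma_\theta\subset\gamma_{e(\theta)}$ joining $x_0$ to $x_1$ deforms continuously and returns to itself at $\theta=\pi$. The tangent of $\gamma_{e(\theta)}$ at $x_0$ is orthogonal to $D^2u(x_0)\,e(\theta)$; as $\theta$ increases from $0$ to $\pi$ this nonzero vector rotates through $\pi\,\mathrm{sign}\det D^2u(x_0)=\pm\pi$, so the tangent line does likewise. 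Hence the unit tangent of $\sigma_\theta$ at $x_0$ pointing toward $x_1$, which varies continuously on this rotating line, undergoes a net rotation of $\pm\pi$; this contradicts $\sigma_\pi=\sigma_0$, which forces the net rotation to be a multiple of $2\pi$. Therefore there is at most one critical point, and so exactly one.

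The principal difficulty is the global nodal analysis of the second paragraph: showing that the zero set of each $u_e$ is a single simple arc with no interior branching. This is where the real input lies—the maximum principle and Hartman--Wintner local structure for $\mathcal{L}w=0$, combined with strict convexity forcing exactly two boundary sign changes. (Equivalently one may bookkeep via degree theory: $\nabla u$ points outward along $\partial\Omega$, so by Poincar\'e--Hopf the indices of the critical points sum to $1$, and the nondegeneracy above reduces uniqueness to showing that all intersections of $\{u_x=0\}$ and $\{u_y=0\}$ carry the same sign, which is exactly what the rotation argument supplies.) A secondary technical point is to justify the continuous dependence of $\gamma_{e(\theta)}$, hence of $\sigma_\theta$, on $\theta$, which follows from the nondegeneracy of the zeros established earlier.
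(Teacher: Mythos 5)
Your proposal is correct, and its skeleton coincides with the paper's: differentiate the equation to obtain a linear elliptic equation with no zeroth-order term satisfied by every directional derivative $z(\theta)=\cos\theta\,u_1+\sin\theta\,u_2$; use the strong maximum principle to exclude closed nodal components; use Hopf's lemma together with strict convexity of $\partial\Omega$ to show $z(\theta)$ has exactly two boundary zeros; conclude via the Hartman--Wintner/Cheng local structure that the nodal set is a single simple arc with no interior branching; and finally run a monodromy argument over $\theta\in[0,\pi]$ exploiting $z(\pi)=-z(0)$. Where you genuinely diverge is the endgame. The paper orients each arc $\mathcal{N}_\theta$ by the rule ``first $P_1$, then $P_2$'' and tracks the two \emph{boundary} endpoints, which must swap as $\theta$ increases from $0$ to $\pi$ because the normal map of $\partial\Omega$ is a bijection; the swap is incompatible with the fixed ordering of $P_1,P_2$ along the arc. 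You instead extract from the no-branching property the interior nondegeneracy $D^2u(x_0)e\neq 0$ for every direction $e$ (the paper proves essentially this fact by the same branching argument, but only \emph{after} uniqueness, as Lemma~\ref{l22}, and only for the unique critical point), and then track the tangent direction of $\sigma_\theta$ at $x_0$ pointing toward $x_1$: since $D^2u(x_0)$ is nonsingular, this direction rotates by exactly $\pm\pi$ as $\theta$ runs over $[0,\pi]$, whereas $\sigma_\pi=\sigma_0$ forces a net rotation in $2\pi\mathbb{Z}$. Both contradictions rest on the same sign flip $z(\pi)=-z(0)$; yours trades the paper's boundary bookkeeping for an interior Hessian-rotation argument, at the cost of the continuity-of-$\sigma_\theta$ issue you correctly flag --- which is indeed fixable, since all interior zeros are simple, by a compactness plus implicit-function-theorem tube argument along $\sigma_{\theta_0}$. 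Your Poincar\'e--Hopf aside is only heuristic as stated (indices summing to $1$ permits any odd number of nondegenerate critical points until one shows all indices are $+1$), but you present it as bookkeeping rather than as the proof, so it does not affect correctness.
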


\begin{proof} The proof follows the arguments used by Philippin in \cite{P79}, for the case $\alpha =0$ in problem (\ref{eq:1.1})-(\ref{eq:1.2}). For completeness, we also give it briefly here.

Before starting the proof, let us note that, in what follows in this paper, we will always employ the
summation convention over repeated indices (from $1$ to $2$) and adopt
the following notations:
$$
u_{1}=\frac{\partial u}{\partial x_{1}},\text{ }u_{2}=\frac{\partial u}{%
\partial x_{2}},\text{ }u_{ij}=\frac{\partial ^{2}u}{\partial x_{i}\partial
x_{j}},\ \text{ for }i,j\in \left\{ 1,2\right\}.
$$
As for the proof, a first observation is that the solution $u$ of (\ref{eq:1.1})-(\ref{eq:1.2}) is analytic in $\Omega $ (see Nirenberg \cite{Ni53}). We denote $z^{k}=u_{k}$, $k=1,2$, and write equation (\ref{eq:1.1}) in the form
\begin{equation}
\left( 1+| \nabla u| ^{2}\right) \Delta
u-u_{ij}u_{i}u_{j}=\left( 1+| \nabla u| ^{2}\right) ^{
\frac{3-\alpha }{2}}.   \label{eq:2.1}
\end{equation}
Differentiating (\ref{eq:2.1}) with respect to $x_{k}$, $k=1,2$, we see that both $
z^{1}$ and $z^{2}$ satisfy the differential equation
\begin{equation}
\left( \left( 1+| \nabla u| ^{2}\right) \delta
_{ij}-u_{i}u_{j}\right) z_{ij}^{k}+2\left( u_{i}\Delta u-u_{ij}u_{i}-\frac{
3-\alpha }{2}\left( 1+| \nabla u| ^{2}\right) ^{\frac{
1-\alpha }{2}}u_i\right) z_{i}^{k}=0,    \label{eq:2.2}
\end{equation}
in $\Omega $, where $\delta _{ij}$ is the Kronecker symbol. Since equation \eqref{eq:2.2}
is linear in $z$, a linear combination of $z^{1}$ and $z^{2}$ of type
$$z(\theta )=z^{1}\cos \theta +z^{2}\sin \theta,$$
with   $\theta\in\r $, also satisfies equation \eqref{eq:2.2} in $\Omega
$. Therefore, the strong maximum principle implies that $z$
takes its minimum and maximum values on $\partial \Omega $ (\cite[Cor. 3.2]{gt}). On the other
hand, since $u=0$ on $\partial \Omega $, we have
$$z^{k}=\frac{\partial u}{\partial \mathbf{n}}n_{k}\quad\text{ on }\partial \Omega,$$
where $\mathbf{n}=(n_{1},n_{2})$ is the outward unit normal vector on $
\partial \Omega $ and $\partial u/\partial \mathbf{n}$ is the outward normal
derivative of $u$. Then $z(\theta)$ can be now rewritten as
$$z(\theta )=\dfrac{\partial u}{\partial \mathbf{n}}\mathbf{n}\cdot (\cos \theta
,\sin \theta )\quad\text{ on }\partial \Omega.$$
Furthermore, since $u<0$ in $\Omega$, the Hopf boundary point lemma (\cite[Lem. 3.4]{gt}) implies
$$\dfrac{\partial u}{\partial \mathbf{n}}<0\quad\text{ on }\partial \Omega. $$
Let $e^{i\theta}$ be a fixed arbitrary  direction  in the plane $\r^2$. Since $\partial\Omega$ is strictly convex, the normal map $\mathbf{n}:\partial\Omega\rightarrow{\mathbb S}^1$ is one-to-one on the unit circle ${\mathbb S}^1$. We thus deduce that $\mathbf{n}(s)$ is orthogonal to $e^{i\theta}$ at exactly two points and by the definition of $z(\theta)$, the function $z(\theta)$ vanishes along $\partial\Omega$ at exactly   two points.

The proof of Theorem  \ref{t12} is obtained by contradiction. Suppose that there exist at least two critical points of $u$ in $\Omega$, namely,  $P_1$ and $P_2$. Then:

\begin{enumerate}
\item The function $z(\theta)$ is not constant in $\Omega$ because $z(\theta)$ has only two zeros along $\partial\Omega$. Since $z(\theta)$ is analytic,   the critical points of $z(\theta)$ are isolated points.
\item Let $\mathcal{N}_\theta=z(\theta)^{-1}(0)$ be the nodal set of $z(\theta)$. Since $z(\theta)$ is analytic, standard theory asserts that near a critical point of $z(\theta)$, the function $z(\theta)$ is asymptotically approximated by a harmonic homogeneous polynomial. Following Cheng \cite{ch}, $\mathcal{N}_\theta$  is diffeomorphic to the nodal set of the approximating homogeneous polynomial. In particular,     $\mathcal{N}_\theta$ is formed by a set of regular analytic curves at regular points, the so-called nodal lines. On the other hand, in a neighborhood of a critical point, the   nodal lines   form an equiangular system.

We point out that  there is no closed component of $\mathcal{N}_\theta$ contained in $\Omega$. Indeed, if  we assume that $\mathcal{N}_\theta$ encloses a subdomain $\Omega'$ of $\Omega$, then $z(\theta)=0$ along $\partial\Omega'$, so the maximum principle would imply  that $z(\theta)$ is identically $0$ in $\Omega'$, contradicting the fact that $z(\theta)$ is not constant.

\item  We prove that   $\mathcal{N}_\theta$ is formed from only one nodal line. Suppose by contradiction that there exist two nodal lines  $L_1$ and $L_2$. Since $L_1$ an $L_2$  are not closed, then the arcs $L_1$ and $L_2$ end precisely at the two boundary points where $z(\theta)$ vanishes.  Since $\Omega$ is simply-connected, then $L_1$ and $L_2$ enclose at least one subdomain $\Omega'\subset\Omega$: this is impossible by the previous item.

\item As a conclusion, the nodal set $\mathcal{N}_\theta$ is formed from exactly one arc. We now give an orientation to the arc $\mathcal{N}_\theta$ for  each $\theta$: the orientation of $\mathcal{N}_\theta$ is chosen such that we first pass through $P_1$ and then through $P_2$. With respect to this orientation, we are ordering the two boundary points where $z(\theta)$ vanishes. More precisely, let us denote by $P(\theta)$ the initial point of $\mathcal{N}_\theta$, which after passing $P_1$ and then $P_2$, finishes at the other boundary point, which is denoted by $Q(\theta)$.

\item Let us consider $\theta$ varying in the interval $[0,\pi]$.  By the definition of $z(\theta)$,   the functions $z(0)$ and $z(\pi)$ coincides up to the sign, that is, $z(0)=-z(\pi)$ and thus the nodal lines $\mathcal{N}_0$ and $\mathcal{N}_\pi$ coincide as sets of points. However,   when    $\theta$ runs in $[0,\pi]$, the ends points of $\mathcal{N}_0$ interchange their position when $\theta$ reaches the value $\theta=\pi$, leading to the nodal line $\mathcal{N}_\pi$. Therefore, according to the chosen orientation in $\mathcal{N}_\theta$,  $P(0)=Q(\pi)$ and $P(\pi)=Q(0)$. Since all the arcs $\mathcal{N}_\theta$ pass first through $P_1$ and then through $P_2$, this interchange of the end points between $\mathcal{N}_0$ and $\mathcal{N}_\pi$ would imply  the existence of another nodal line for  $z(\pi)$. But this is impossible, by item 3. This contradiction    completes the proof of Theorem \ref{t12}.
\end{enumerate}
\end{proof}

Now, once the uniqueness of the interior critical point of $u$ is proved, using some rotation and/or translation if necessary, we can choose the coordinates axes such that the unique critical point of $u$ is located at $\mathbf{O}$, the origin of the coordinate system. Then $\mathbf{O}$ is the unique point of global minimum for $u$, so   we   have $u_{11}(\mathbf{O})\geq 0$ and $u_{22}(\mathbf{O})\geq 0$. The next lemma shows that in fact these inequalities are strict.

\begin{lemma}\label{l22}
  If $u$ is the solution of problem \eqref{eq:1.1}-\eqref{eq:1.2}, then
  $$u_{11}(\mathbf{O})>0,\text{\qquad }u_{22}(\mathbf{O})>0. $$
\end{lemma}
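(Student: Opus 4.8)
The plan is to reduce the lemma to showing that the Hessian of $u$ at $\mathbf{O}$ is positive definite. First I would evaluate the equation in the form \eqref{eq:2.1} at the critical point $\mathbf{O}$. Since $\nabla u(\mathbf{O})=0$, the left-hand side collapses to $\Delta u(\mathbf{O})$ and the right-hand side to $1$, so that
\[
u_{11}(\mathbf{O})+u_{22}(\mathbf{O})=\Delta u(\mathbf{O})=1 .
\]
Because $\mathbf{O}$ is the point of global minimum of $u$, the Hessian $\bigl(u_{ij}(\mathbf{O})\bigr)$ is positive semidefinite; in particular $u_{11}(\mathbf{O})\ge 0$ and $u_{22}(\mathbf{O})\ge 0$. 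Since for a positive definite matrix every diagonal entry is strictly positive (irrespective of the orientation of the axes), it suffices to prove that $\bigl(u_{ij}(\mathbf{O})\bigr)$ is nonsingular, and I would argue this by contradiction.

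Assume, for contradiction, that $\bigl(u_{ij}(\mathbf{O})\bigr)$ is singular. Combined with positive semidefiniteness and the trace identity above, this forces one of the diagonal entries to vanish; say $u_{11}(\mathbf{O})=0$ (the case $u_{22}(\mathbf{O})=0$ is identical after interchanging the axes, i.e.\ taking $\theta=\pi/2$ below). A positive semidefinite symmetric matrix with a vanishing diagonal entry must have the whole corresponding row and column equal to zero, hence $u_{12}(\mathbf{O})=0$ and $u_{22}(\mathbf{O})=1$. Thus the null direction of the Hessian is $(1,0)$, and if I set $\theta=0$ in the family $z(\theta)$ introduced in the proof of Theorem \ref{t12}, the function $z(0)=u_1$ satisfies $z(0)(\mathbf{O})=0$ together with
\[
\nabla z(0)(\mathbf{O})=\bigl(u_{11}(\mathbf{O}),u_{12}(\mathbf{O})\bigr)=(0,0).
\]
In other words, $\mathbf{O}$ is not merely a zero but a \emph{degenerate} (critical) zero of the nonconstant analytic solution $z(0)$ of the linear elliptic equation \eqref{eq:2.2}.

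The final step is to contradict the nodal analysis already developed in the proof of Theorem \ref{t12}. Following Cheng \cite{ch}, near the degenerate zero $\mathbf{O}$ the nodal set $\mathcal{N}_0=z(0)^{-1}(0)$ is diffeomorphic to the nodal set of an approximating harmonic homogeneous polynomial whose degree $m$ equals the vanishing order of $z(0)$ at $\mathbf{O}$; since this order is at least $2$, at least $2m\ge 4$ nodal arcs emanate from $\mathbf{O}$ in an equiangular system. Each such arc can only terminate on $\partial\Omega$ or return to an interior critical point of $z(0)$; but $\mathbf{O}$ is the unique interior critical point, and no nodal arc can close up inside $\Omega$ (otherwise $z(0)$ would vanish on an enclosed subdomain and hence, by the maximum principle as in item 2 of the proof of Theorem \ref{t12}, be identically zero). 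Therefore all of these $\ge 4$ branches must reach $\partial\Omega$, producing at least four zeros of $z(0)$ on $\partial\Omega$, which contradicts the fact established in the proof of Theorem \ref{t12} that $z(0)$ vanishes at exactly two boundary points. Consequently $\bigl(u_{ij}(\mathbf{O})\bigr)$ is nonsingular, hence positive definite, and so $u_{11}(\mathbf{O})>0$ and $u_{22}(\mathbf{O})>0$. I expect the delicate point to be the passage from the algebraic degeneracy of the Hessian to the degenerate nodal picture — namely, justifying that the vanishing of $\nabla z(0)$ at $\mathbf{O}$ genuinely yields at least four crossing nodal arcs — which is precisely where the analyticity of $u$ and Cheng's local description of nodal sets are essential.
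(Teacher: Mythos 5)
Your core argument is essentially the paper's own proof: assume a diagonal entry, say $u_{11}(\mathbf{O})$, vanishes; use positive semidefiniteness of the Hessian at the minimum to conclude $u_{12}(\mathbf{O})=0$, so that $z^1=u_1$ vanishes at $\mathbf{O}$ together with its gradient; then invoke analyticity and Cheng's local structure theorem to produce at least two nodal lines crossing at $\mathbf{O}$, contradicting the nodal picture from the proof of Theorem \ref{t12} (exactly one nodal line, equivalently exactly two boundary zeros of $z(\theta)$). That mechanism is sound and is exactly what the paper does.

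However, one step in your packaging is false. You claim that a singular, positive semidefinite Hessian with trace $1$ must have a vanishing diagonal entry; the matrix $\bigl(\begin{smallmatrix} 1/2 & 1/2 \\ 1/2 & 1/2 \end{smallmatrix}\bigr)$ is positive semidefinite, singular, of trace $1$, and has both diagonal entries positive, so ``singular $\Rightarrow$ some $u_{ii}(\mathbf{O})=0$'' is wrong, and your proof of nonsingularity (hence of positive definiteness) is incomplete as written. This does not sink the lemma, because the lemma asserts only $u_{11}(\mathbf{O})>0$ and $u_{22}(\mathbf{O})>0$: one merely has to exclude $u_{11}(\mathbf{O})=0$ and $u_{22}(\mathbf{O})=0$, and those are precisely the two cases your nodal argument handles directly --- the detour through nonsingularity should simply be deleted. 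If you do want the stronger statement that the Hessian is positive definite, the correct route is to let $(\cos\theta_0,\sin\theta_0)$ span its kernel and run the identical degenerate-zero argument on $z(\theta_0)$ instead of $z(0)$. A further minor point: your assertion that $\mathbf{O}$ is the unique interior critical point \emph{of $z(0)$} is unjustified (uniqueness was proved for critical points of $u$, not of $u_1$), but it is also unnecessary --- if two nodal arcs emanating from $\mathbf{O}$ met again anywhere in $\Omega$, they would enclose a subdomain on whose boundary $z(0)$ vanishes, and the maximum principle argument you already cite rules this out regardless of where the meeting point lies.
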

 \begin{proof}
The proof is obtained by contradiction. Suppose that $u_{11}(\mathbf{O})=0$ (a similar argument will work if we assume instead that $u_{22}(\mathbf{O})=0$). If the function $z^1=u_1$ is constant in $\Omega$, then $u$ depends only on the variable $x_2$ and the boundary condition (\ref{eq:1.2}) is impossible. Thus $z^1$ is a non constant analytic function. Since $z^1$ vanishes at $\mathbf{O}$ as well as $z^1_1$ and $z^1_2$, then the function $z^1$ vanishes at $\mathbf{O}$ with a finite order $m\geq 1$. Consequently there exist at least two nodal lines of  $z^1$ which form an equiangular system in a neighborhood of $\mathbf{O}$. However we have already proved in Theorem \ref{t12} the existence of exactly one nodal line, unless $z^1$ is constant in $\Omega$, so that we achieve a contradiction.
\end{proof}

\begin{lemma}\label{l23}
 If $\beta \in \left[ 1,2\right] $, then the auxiliary function $\Phi( \mathbf{x};\beta) $ attains its minimum value at the critical point of $
u$ or on the boundary $\partial \Omega $.
\end{lemma}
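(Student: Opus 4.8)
The plan is to follow the Payne--Philippin pointwise technique adapted from \cite{P79}, built around the positive definite operator $a_{ij}:=(1+|\nabla u|^2)\delta_{ij}-u_iu_j$ that already appears in \eqref{eq:2.2} (its eigenvalues are $1$ in the direction of $\nabla u$ and $1+|\nabla u|^2$ orthogonally, so it is uniformly elliptic). Since $\overline{\Omega}$ is compact, $\Phi$ attains its minimum either on $\partial\Omega$ or at an interior point $\mathbf{x}_0\in\Omega$. By Theorem \ref{t12} the only interior critical point of $u$ is $\mathbf{O}$, so if $\mathbf{x}_0\neq\mathbf{O}$ then $\nabla u(\mathbf{x}_0)\neq 0$, and it is exactly this possibility I must exclude. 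At such an interior minimum one has $\nabla\Phi(\mathbf{x}_0)=0$, and since $(\Phi_{ij}(\mathbf{x}_0))$ is positive semidefinite while $(a_{ij})$ is positive definite, necessarily $a_{ij}\Phi_{ij}(\mathbf{x}_0)\geq 0$. The whole argument then reduces to computing $a_{ij}\Phi_{ij}$ and showing it is nonpositive at any critical point of $\Phi$ where $\nabla u\neq 0$.

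The core of the work is this computation. Writing $W:=1+|\nabla u|^2$, I would differentiate $\Phi$ twice and contract with $a_{ij}$, then replace $a_{ij}u_{ij}$ by $W^{(3-\alpha)/2}$ using \eqref{eq:2.1}, eliminate the third-order term coming from $W_{ij}=2u_{ki}u_{kj}+2u_ku_{kij}$ by means of the differentiated equation \eqref{eq:2.2}, and expand the remaining quadratic-in-Hessian terms using $W_i=2u_ku_{ki}$. Next I fix coordinates aligned with the gradient at $\mathbf{x}_0$, so that $u_1=|\nabla u|$ and $u_2=0$, and use \eqref{eq:2.1} in the reduced form $u_{11}+Wu_{22}=W^{(3-\alpha)/2}$ to eliminate $u_{22}$. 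At a critical point of $\Phi$ the conditions $\Phi_1=\Phi_2=0$ read $u_{12}=0$ and $u_{11}=\tfrac{\beta}{2}W^{(3-\alpha)/2}$; substituting these, I expect $a_{ij}\Phi_{ij}(\mathbf{x}_0)$ to collapse (the $|\nabla u|^4$ terms cancelling) to
\[
a_{ij}\Phi_{ij}(\mathbf{x}_0)=W^{\frac{1-\alpha}{2}}(\beta-2)\Big[(\beta-1)+\tfrac{\alpha\beta}{2}\,|\nabla u|^2\Big].
\]

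For $\beta\in[1,2)$ this closes the argument: since $\alpha,\beta>0$ and $|\nabla u(\mathbf{x}_0)|>0$, the bracket is strictly positive while $(\beta-2)<0$, so $a_{ij}\Phi_{ij}(\mathbf{x}_0)<0$, contradicting $a_{ij}\Phi_{ij}(\mathbf{x}_0)\geq 0$. Hence $\Phi$ has no interior minimum away from $\mathbf{O}$, which is precisely the assertion of the lemma. The main obstacle is the endpoint $\beta=2$: there the factor $(\beta-2)$ annihilates the whole expression and the pointwise test is inconclusive. In fact, completing the square in the full (non-critical) expression for $a_{ij}\Phi_{ij}$ reveals that for $\beta=2$ the function $\Phi$ satisfies a linear elliptic inequality as a \emph{subsolution}, so the strong maximum principle on its own only controls the maximum, not the minimum. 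To cover $\beta=2$ I would therefore either pass to the limit $\beta\to 2^-$ in the inequalities already obtained for the settled cases, or perform a finer, higher-order analysis at the (necessarily very degenerate, $\nabla\Phi=0$ and $D^2\Phi=0$) critical point, exploiting the analyticity of $u$ from \cite{Ni53}. Apart from this delicate endpoint, the only place that requires genuine care is the algebraic reduction to the displayed identity.
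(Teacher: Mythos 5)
Your proposal is correct, and its algebraic core coincides with the paper's: the formula you predict for $a_{ij}\Phi_{ij}$ at a critical point of $\Phi$ is exactly the paper's identity \eqref{eq:2.11} multiplied by $1+|\nabla u|^2$ and evaluated at a point where $\nabla\Phi=0$ (which kills the drift term $W_k\Phi_k$), so your ``expected'' collapse of the computation is indeed what happens. The genuine difference is how the conclusion is extracted from that identity. The paper keeps \eqref{eq:2.11} as a differential inequality on all of $\Omega\setminus\{\mathbf{O}\}$ and applies the strong maximum principle to $\Phi$ as a supersolution: since the right-hand side is nonpositive for every $\beta\in[1,2]$ --- including $\beta=2$, where it vanishes identically --- the minimum can only occur on $\partial\Omega$ or at the excluded singular point $\mathbf{O}$, and the whole range of $\beta$ is settled in one stroke. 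You instead run an elementary pointwise Hessian test (positive semidefinite $\Phi_{ij}(\mathbf{x}_0)$ traced against the positive definite $a_{ij}$) at a hypothetical interior minimum $\mathbf{x}_0\neq\mathbf{O}$; this buys a proof that needs no maximum-principle machinery and never touches the singular vector field $W_k$, but, as you note, it is genuinely inconclusive at the endpoint $\beta=2$. Your first proposed patch does close that endpoint and is easy to complete: $\Phi(\mathbf{x};\beta)=\Phi(\mathbf{x};2)+(2-\beta)u$ with $u$ bounded, so $\Phi(\cdot;\beta)\to\Phi(\cdot;2)$ uniformly as $\beta\to 2^-$, and the relation $\min_{\overline{\Omega}}\Phi(\cdot;\beta)=\min_{\{\mathbf{O}\}\cup\partial\Omega}\Phi(\cdot;\beta)$, already established for $\beta\in[1,2)$, passes to the limit; this is the same continuity-in-$\beta$ device the paper itself uses at the other endpoint $\beta=1$ in the proof of Theorem \ref{t1}, so your route is fully in its spirit. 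One side remark of yours is inaccurate, though harmless for your argument: you claim that at $\beta=2$ the function $\Phi$ is merely a subsolution, so the strong maximum principle ``only controls the maximum, not the minimum.'' In fact \eqref{eq:2.11} is an exact identity whose right-hand side vanishes when $\beta=2$, so $\Phi(\cdot;2)$ is simultaneously a sub- and a supersolution away from $\mathbf{O}$, and the minimum principle does apply there --- which is precisely why the paper needs no separate treatment of that endpoint.
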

\begin{proof}
Differentiating successively (\ref{eq:1.7}), we have
\begin{equation}
\Phi _{k}=2\left( 1+| \nabla u| ^{2}\right) ^{\frac{
\alpha -3}{2}}u_{ik}u_{i}-\beta u_{k},   \label{eq:2.8}
\end{equation}
respectively
\begin{equation}
\Phi _{kl}=2\left( \alpha -3\right) \left( 1+| \nabla u|
^{2}\right) ^{\frac{\alpha -5}{2}}u_{ik}u_{i}u_{jl}u_{j}+2\left(
1+| \nabla u| ^{2}\right) ^{\frac{\alpha -3}{2}}\left(
u_{ikl}u_{i}+u_{il}u_{il}\right) -\beta u_{kl}.   \label{eq:2.9}
\end{equation}
We now remind the following identity
\begin{equation}
u_{ik}u_{ik}| \nabla u| ^{2}=| \nabla
u| ^{2}\left( \Delta u\right) ^{2}+2u_{ij}u_{i}u_{kj}u_{k}-2\left(
\Delta u\right) u_{ij}u_{i}u_{j},   \label{eq:2.10}
\end{equation}
which holds only in $\mathbb{R}^{2}$ (see \cite{PS04}). Making use of (\ref{eq:2.10}), after some manipulations (see \cite[Eq. (2.15)]{PP79}), we obtain
\begin{equation}
\Delta \Phi -\frac{1}{1+| \nabla u| ^{2}}\Phi
_{kj}u_{k}u_{j}+W_{k}\Phi _{k}=\left( \beta -2\right) \left( 1+|
\nabla u| ^{2}\right) ^{-\frac{\alpha +1}{2}}\left( \beta -1+\beta
\frac{\alpha }{2}| \nabla u| ^{2}\right) ,
\label{eq:2.11}
\end{equation}
where $W_{k}$ is a smooth vector function which is singular at the critical
point of $u$. We observe that the right hand-side of (\ref{eq:2.11}) is non-positive, because $\beta-2\leq 0$, and the other two parentheses are positive. Therefore, the conclusion of Lemma \ref{l23} follows now from (\ref{eq:2.11}), as a direct consequence of the strong maximum principle.
\end{proof}

\begin{lemma} \label{l24}
If $\beta \in [1,2]$, then the auxiliary function $\Phi ( \mathbf{
x};\beta) $ cannot be identically constant on $\overline{\Omega }$.
\end{lemma}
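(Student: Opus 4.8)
The plan is to argue by contradiction, assuming that $\Phi(\mathbf{x};\beta)$ equals some constant on $\overline{\Omega}$, and to read off the consequences from the differential identity \eqref{eq:2.11}. If $\Phi$ is constant then $\Phi_k\equiv 0$, $\Phi_{kj}\equiv 0$ and $\Delta\Phi\equiv 0$, so the entire left-hand side of \eqref{eq:2.11} vanishes on $\Omega$. Hence the right-hand side must vanish identically, that is,
\[
(\beta-2)\left(1+|\nabla u|^2\right)^{-\frac{\alpha+1}{2}}\left(\beta-1+\beta\tfrac{\alpha}{2}|\nabla u|^2\right)\equiv 0 \quad\text{in }\Omega.
\]
Since the factor $\left(1+|\nabla u|^2\right)^{-(\alpha+1)/2}$ is everywhere positive, the product of the two remaining factors must vanish throughout $\Omega$, and the proof then splits according to the value of $\beta$.

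For $\beta\in[1,2)$ the factor $\beta-2$ is a negative constant, while $\beta-1+\beta\frac{\alpha}{2}|\nabla u|^2\ge\beta-1\ge 0$ because $\alpha,\beta>0$. When $\beta\in(1,2)$ this last factor is strictly positive everywhere, so the right-hand side is strictly negative on all of $\Omega$, contradicting its identical vanishing. When $\beta=1$ the right-hand side reduces to a negative multiple of $|\nabla u|^2$, strictly negative wherever $\nabla u\neq 0$; its vanishing would force $|\nabla u|\equiv 0$, hence $u$ constant, contradicting $u<0$ in $\Omega$ together with $u=0$ on $\partial\Omega$. This disposes of the whole range $\beta\in[1,2)$ directly from the sign of the right-hand side.

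The genuinely delicate case, and the step where I expect essentially all the work to lie, is $\beta=2$, for which the prefactor $\beta-2$ annihilates the right-hand side of \eqref{eq:2.11} identically, so that \eqref{eq:2.11} alone produces no contradiction and extra rigidity must be exploited. Here I would use that, since $u\equiv 0$ on $\partial\Omega$, constancy of $\Phi$ forces $\left(1+|\nabla u|^2\right)^{(\alpha-1)/2}$, and hence $|\nabla u|$, to be constant along $\partial\Omega$; problem \eqref{eq:1.1}-\eqref{eq:1.2} thereby becomes overdetermined. Since the operator and the right-hand side of \eqref{eq:1.1} depend on $\nabla u$ only through $|\nabla u|$, a Serrin-type moving-plane argument (cf.\ \cite{Se69}) would force $\Omega$ to be a disk and $u$ to be radially symmetric, after which a direct computation with the resulting ordinary differential equation for $u(r)$ shows that $\Phi$ cannot be constant, giving the contradiction. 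A more self-contained alternative, avoiding the symmetry theorem, is to observe that $\Phi\equiv\text{const}$ with $\beta=2$ is equivalent to $|\nabla u|^2$ being a prescribed function of $u$ alone; differentiating this relation, then substituting into equation \eqref{eq:2.1} and into the two-dimensional identity \eqref{eq:2.10}, one is led to an over-determined relation in the single variable $u$ that the explicitly known profile cannot satisfy, again a contradiction. Whichever route is chosen, the cases $\beta\in[1,2)$ are immediate, and the burden of the argument is the rigidity analysis at $\beta=2$.
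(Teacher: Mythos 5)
Your proof is correct and takes essentially the same route as the paper: for $\beta\in[1,2)$ you rule out constancy by the sign of the right-hand side of \eqref{eq:2.11} (your treatment of the endpoint $\beta=1$, where that right-hand side vanishes only at critical points, is in fact slightly more careful than the paper's), and for $\beta=2$ you use, exactly as the paper does, that constancy of $\Phi$ forces $|\nabla u|$ to be constant on $\partial\Omega$, so Serrin's symmetry theorem makes $\Omega$ a disk and $u$ radial, after which the radial ODE gives the contradiction. The only slip is bibliographic: the symmetry result is Serrin's paper \cite{Se71}, not \cite{Se69}.
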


\begin{proof}
If $\beta \in [1,2)$, then obviously no constant $\Phi ( \mathbf{
x};\beta) $ can satisfy (\ref{eq:2.11}) because the right hand-side is positive. Therefore, it remains to investigate the case  $\beta =2$. In such a case, we assume contrariwise that $\Phi(\mathbf{x};2)$ is constant on $\overline{\Omega }$. By the definition of $\Phi(\mathbf{x};2)$ and the fact that  $u=0$ on $\partial \Omega $, we deduce that
$|\nabla u|$ is constant on $\partial \Omega $. Therefore,
according to a  symmetry result of Serrin (\cite{Se71}), the domain $\Omega $
must be a disk and the solution to problem (\ref{eq:1.1})-(\ref{eq:1.2}) must be radial, that is,  $
u=u\left( r\right) $, with $r=| \mathbf{x}| $. Now, in radial coordinates, equation (\ref{eq:1.1}) can be rewritten as
\begin{equation}
u_{rr}+\frac{1}{r}u_{r}( 1+u_{r}^{2}) =(1+u_{r}^{2})
^{\frac{3-\alpha }{2}}.  \label{eq:2.12}
\end{equation}
On the other hand, since $\Phi ( \mathbf{x};2)$ is constant, we
have that $\partial \Phi /\partial r=\Phi _{,k}u_{,k}=0$, so that
$u_{rr}=(1+u_{r}^{2}) ^{\frac{3-\alpha }{2}}$ and (\ref{eq:2.12}) becomes
$$\frac{1}{r}u_{r}(1+u_{r}^{2}) =0, $$
which is impossible, since $u_{r}\neq 0$ for $r\neq 0$. We have thus obtained a contradiction and the proof is achieved.
\end{proof}

We are now in position to prove Theorem \ref{t1}. The proof is obtained by contradiction. Let us assume that the minimum of $\Phi ( \mathbf{x};\beta) $ occurs at the   critical point $\mathbf{O}$
of $u$. We distinguish two cases.

\begin{enumerate}
\item Case $\beta \in (1,2]$. Using the fact that $u_{1}( \mathbf{O}) =u_{2}\left(
\mathbf{O}\right) =u_{12}( \mathbf{O}) =0$, we evaluate (\ref{eq:2.8}) and
(\ref{eq:2.9}) at $\mathbf{O}$ to obtain
$$\Phi _{1}(\mathbf{O};\beta )=\Phi _{2}(\mathbf{O};\beta)=0,$$
respectively
$$
\begin{array}{l}
\Phi _{11}(\mathbf{O};\beta )=2u_{11}^{2}(\mathbf{O})-\beta u_{11}(\mathbf{O}%
),  \\
\Phi _{12}(\mathbf{O};\beta )=0,  \\
\Phi _{22}(\mathbf{O};\beta )=2u_{22}^{2}(\mathbf{O})-\beta u_{22}(\mathbf{O}%
).
\end{array}$$
Since $\Phi(\mathbf{x};\beta )$ attains its minimum in $\mathbf{O}$, we have
\begin{equation}
\begin{split}
0\leq \Phi_{11}(\mathbf{O};\beta )& =u_{11}(\mathbf{O})(2u_{11}(\mathbf{O})-\beta
),   \\
0\leq \Phi_{22}(\mathbf{O};\beta )& =u_{22}(\mathbf{O})(2u_{22}(\mathbf{O})-\beta
).
\end{split}
 \label{eq:2.16}
\end{equation}
It follows then from Lemma \ref{l22} and (\ref{eq:2.16}) that
\begin{equation}
2u_{11}(\mathbf{O})-\beta \geq 0,\qquad 2u_{22}(\mathbf{O})-\beta \geq 0. \label{eq:2.17}
\end{equation}
Summing now these two last inequalities, if follows that
\begin{equation}
\Delta u(\mathbf{O})-\beta \geq 0.    \label{eq:2.18}
\end{equation}
On the other hand, evaluating \eqref{eq:1.1} at $\mathbf{O}$, we get
\begin{equation}
\Delta u(\mathbf{O})=1.   \label{eq:2.19}
\end{equation}
Inserting now (\ref{eq:2.19}) into (\ref{eq:2.18}), we conclude that $\beta \leq 1$,  which contradicts the assumption that $\beta >1$, so that Theorem 1.1 is
proved in this case.

\item Case $\beta =1$. We repeat a continuity argument used by Philippin and Safoui in \cite{PS04}. From Lemma \ref{l24} we know that for
all $\beta \in  [1,2]$ the auxiliary functions $\Phi( \mathbf{x};\beta) $ take its minimum value either on the boundary $\partial
\Omega $ or at the critical point of $u$. On the
other hand, from the previous case $\beta<1$, we also know that  $\Phi
( \mathbf{x};\beta) $ takes its minimum value on $\partial
\Omega $ for all $\beta \in \left( 1,2\right] $. However, when $\beta $
decreases continuously from $2$ to $1$, the points at which $\Phi (\mathbf{x};\beta) $ takes its minimum value have to move continuously. Therefore, they cannot jump away from $\partial \Omega $ at the interior critical point of $u$. This contradiction thus proves  Theorem \ref{t1}, when $\beta=1$.
\end{enumerate}

%%%%%%%%%%%%%%%%
\section{Proof of Theorem \ref{t1} for the
 problem (\ref{eq:1.3})-(\ref{eq:1.4})}\label{se:3}
%%%%%%%%%%%%%%%%%%%%%%

The  proof of Theorem \ref{t1} for problem (\ref{eq:1.3})-(\ref{eq:1.4}) is similar to the one given in the previous section for problem (\ref{eq:1.1})-(\ref{eq:1.2}). We will thus follow the same steps and only  present  the  differences.  Recall that the constant $\mu$ in (\ref{eq:1.3}) is positive.

Again, let us   notice that, since the right hand-side of equation (\ref{eq:1.3}) is positive, the strong maximum principle implies that the solution $v$ satisfies $v<0$ in $\Omega$. Next, we have to show that $v$  has only one critical point in $\Omega$. To this end, the argument is the same as the one from the previous section. More precisely, we first differentiate equation (\ref{eq:1.3}) with respect to $x_k$, to obtain
$$
\left( \left( 1+| \nabla v| ^{2}\right) \delta
_{ij}-v_{i}v_{j}\right) z_{ij}^{k}+2\left( v_{i}\Delta v-v_{ij}u_{i}-v_i-\frac32\mu(1+|\nabla v|^2)\right) z_{i}^{k}=0. $$
Therefore, the strong maximum principle can be applied to this equation and the remaining part of the proof is identical to what one has already seen in the proof of Theorem \ref{t12}. This means that $v$ has a unique critical point, which is a point of global minimum. As before, we may assume that this point is   the origin $\mathbf{O}$ and clearly a result similar to Lemma \ref{l22} can be easily derived in this case, to obtain that $v_{11}(\mathbf{O})>0$ and $v_{22}(\mathbf{O})>0$.

We now prove a result which analogous to Lemma \ref{l23}.

\begin{lemma}\label{l33}
If $\beta \in \left[ 1,2\right] $, then the auxiliary function $\Psi \left(
\mathbf{x};\beta \right) $ attains its minimum value at the critical point
of $v$ or on the boundary $\partial \Omega $.
\end{lemma}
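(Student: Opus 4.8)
The plan is to mirror the proof of Lemma \ref{l23} exactly, adapting each computation to the new $P$-function $\Psi$ and the equation \eqref{eq:1.3}. First I would differentiate the definition \eqref{eq:1.8} of $\Psi$ once to obtain an expression for $\Psi_k$ in terms of $v_{ik}v_i$ and $v_k$, and then differentiate again to get $\Psi_{kl}$. The logarithmic structure of $\Psi$ will produce somewhat messier intermediate terms than in the pure-power case of $\Phi$, since $\Psi$ involves $\ln(1+|\nabla v|^2) - 2\ln(1 + \mu\sqrt{1+|\nabla v|^2})$; I will need to carry through the chain rule carefully for both logarithms.

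Next I would set up the same elliptic operator that appears on the left-hand side of \eqref{eq:2.11}, namely the combination
\[
\Delta \Psi - \frac{1}{1+|\nabla v|^2}\,\Psi_{kj}v_k v_j + \widetilde W_k\,\Psi_k,
\]
for an appropriate smooth vector field $\widetilde W_k$ (again singular only at the critical point of $v$). The goal is to show, using equation \eqref{eq:1.3} rewritten in the nondivergence form analogous to \eqref{eq:2.1} together with the purely two-dimensional identity \eqref{eq:2.10} applied to $v$, that this operator applied to $\Psi$ yields a manifestly nonpositive right-hand side whenever $\beta \in [1,2]$. Once such an inequality is established, the conclusion of the lemma follows immediately from the strong maximum principle: $\Psi$ attains its minimum either on $\partial\Omega$ or at the interior critical point $\mathbf{O}$ of $v$, exactly as in Lemma \ref{l23}.

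The main obstacle I anticipate is the algebraic manipulation needed to reduce the right-hand side of the differential inequality to a clearly signed expression. In the $\Phi$ case the outcome \eqref{eq:2.11} factored neatly into $(\beta-2)$ times a positive quantity; here the presence of the forcing constant $\mu$ and the logarithmic weight means I expect the right-hand side to be a sum of terms, and the delicate point will be verifying that the particular choice of weight $1 + \mu\sqrt{1+|\nabla v|^2}$ inside the logarithm of \eqref{eq:1.8} is exactly what makes the $\mu$-dependent contributions combine with the $(\beta-2)$ factor so as to preserve the correct sign on the whole interval $\beta\in[1,2]$. In other words, the logarithmic form of $\Psi$ is presumably engineered precisely to cancel the extra terms coming from the inhomogeneity $\mu$, and confirming this cancellation is where the real work lies; the rest of the argument is a routine transcription of Section \ref{se:2}.
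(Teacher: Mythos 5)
Your plan coincides with the paper's own proof: differentiate $\Psi$ twice, use equation \eqref{eq:1.3} in nondivergence form together with the two-dimensional identity \eqref{eq:2.10} to derive an identity of the form \eqref{eq:2.11} for $\Psi$, observe that the right-hand side is $(\beta-2)$ times a positive quantity (the paper's \eqref{eq:3.4}, whose bracket is positive precisely because $\mu>0$ and $\beta\geq 1$), and invoke the strong maximum principle away from the singular point of $W_k$. The cancellation you flag as the delicate point is exactly what \eqref{eq:3.4} records, so your approach is the paper's approach.
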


\begin{proof}
Differentiating successively (\ref{eq:1.8}), we obtain
\begin{equation}
\Psi _{k}=\frac{2v_{ik}v_{i}}{\left( 1+| \nabla v|
^{2}\right) \left( 1 +\mu \sqrt{1+| \nabla v| ^{2}}
\right) }-\beta v_{k},   \label{eq:3.2}
\end{equation}
respectively
\begin{equation}
\Psi _{kl}=\frac{2v_{ikl}v_{i}+2v_{ik}v_{il}}{
1+| \nabla v| ^{2} +\mu \left( 1+| \nabla
v| ^{2}\right) ^{3/2} }-\frac{4\left(1 +\frac{3}{2}
\mu \sqrt{1+| \nabla v| ^{2}}\right)
v_{ik}v_{i}v_{jl}v_{j}}{\left(  1+| \nabla
v| ^{2}  +\mu ( 1+| \nabla v|
^{2})^{3/2}\right)^{2}}-\beta v_{kl}.   \label{eq:3.3}
\end{equation}
Then the equation corresponding here to (\ref{eq:2.11}) is
\begin{equation}
\Delta \Psi -\frac{1}{1+| \nabla v| ^{2}}\Psi
_{kj}v_{k}v_{j}+W_{k}\Psi _{k} =\frac{\beta -2}{1+| \nabla
v| ^{2}}\left\{ \left( \beta -1\right) \left(1 +\mu \sqrt{
1+| \nabla v| ^{2}}\right) +  \frac{\beta }{2}
| \nabla v| ^{2}\right\}. \label{eq:3.4}
\end{equation}
If $\beta \in \left[ 1,2\right]$, then $\beta-2\leq 0$ and the bracket in the above identity is positive (here we have used the fact that $\mu>0$). The result follows now as a direct consequence of the strong maximum principle.
\end{proof}

\begin{lemma}
If $\beta \in  [1,2]$, then the auxiliary function $\Psi \left(
\mathbf{x};\beta \right) $ cannot be identically constant on $\overline{
\Omega }$.
\end{lemma}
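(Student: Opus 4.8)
The plan is to mirror the proof of Lemma~\ref{l24}, splitting the argument according to whether $\beta\in[1,2)$ or $\beta=2$.

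First I would dispose of the range $\beta\in[1,2)$ directly from the identity \eqref{eq:3.4}. If $\Psi(\mathbf{x};\beta)$ were constant on $\overline{\Omega}$, then $\Psi_k\equiv0$ and $\Psi_{kj}\equiv0$, so the entire left-hand side of \eqref{eq:3.4} would vanish identically. However, for $\beta\in[1,2)$ the right-hand side of \eqref{eq:3.4} does not vanish: the prefactor $(\beta-2)/(1+|\nabla v|^2)$ is strictly negative, while the brace $(\beta-1)(1+\mu\sqrt{1+|\nabla v|^2})+\tfrac{\beta}{2}|\nabla v|^2$ is strictly positive on the nonempty open set where $\nabla v\neq0$ (here I use $\mu>0$ and $\beta\ge1$). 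Hence the right-hand side is strictly negative on an open subset of $\Omega$, contradicting the vanishing of the left-hand side. This rules out a constant $\Psi$ for every $\beta\in[1,2)$.

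The delicate case is $\beta=2$, where the factor $\beta-2$ annihilates the right-hand side of \eqref{eq:3.4}, so a constant $\Psi$ is no longer excluded by the differential identity alone. Here I would argue exactly as in Lemma~\ref{l24}. Assume $\Psi(\mathbf{x};2)$ is constant on $\overline{\Omega}$. Since $v=0$ on $\partial\Omega$, the term $\beta v$ drops out there and $\Psi$ reduces on $\partial\Omega$ to $\ln g(|\nabla v|)$, where $g(t)=(1+t^2)/(1+\mu\sqrt{1+t^2})^2$. Setting $s=\sqrt{1+t^2}\ge1$ gives $g=(s/(1+\mu s))^2$, which is strictly increasing in $s$ and hence strictly increasing, thus injective, in $t\ge0$; therefore constancy of $\Psi$ on $\partial\Omega$ forces $|\nabla v|$ to be constant on $\partial\Omega$. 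Together with $v=0$ on $\partial\Omega$, this is an overdetermined boundary condition, so by the symmetry result of Serrin \cite{Se71} the domain $\Omega$ must be a disk and the solution radial, $v=v(r)$ with $r=|\mathbf{x}|$. It then remains to derive a contradiction in the radial setting: writing \eqref{eq:1.3} in radial coordinates yields $v_{rr}+\tfrac1r v_r(1+v_r^2)=(1+v_r^2)(1+\mu\sqrt{1+v_r^2})$, whereas constancy of $\Psi(\mathbf{x};2)$ gives $\partial\Psi/\partial r=\Psi_{,k}v_{,k}=0$, which by \eqref{eq:3.2} (with $\beta=2$ and $v_{ik}v_iv_k=v_{rr}v_r^2$ in the radial case) forces $v_{rr}=(1+v_r^2)(1+\mu\sqrt{1+v_r^2})$. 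Substituting this into the radial equation leaves $\tfrac1r v_r(1+v_r^2)=0$, which is impossible since $v_r\neq0$ for $r\neq0$, completing the case $\beta=2$.

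I expect the main obstacle to lie entirely in the case $\beta=2$: concretely, in verifying the monotonicity and injectivity of $g$ that converts boundary constancy of $\Psi$ into constancy of $|\nabla v|$, and in checking that equation \eqref{eq:1.3} belongs to the class of quasilinear elliptic equations for which Serrin's moving-plane argument is available, so that the overdetermined data genuinely forces radial symmetry. Once radial symmetry is in hand, the concluding radial computation is routine.
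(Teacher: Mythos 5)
Your proposal is correct and follows essentially the same route as the paper: the identity \eqref{eq:3.4} rules out constancy for $\beta\in[1,2)$, and for $\beta=2$ the boundary condition $v=0$ plus constancy of $\Psi$ forces $|\nabla v|$ constant on $\partial\Omega$, whence Serrin's symmetry theorem gives a radial solution on a disk and the radial form of \eqref{eq:1.3} yields a contradiction. In fact you fill in a step the paper leaves implicit, namely the strict monotonicity of $g(t)=(1+t^2)/(1+\mu\sqrt{1+t^2})^2$ needed to pass from constancy of $\Psi$ on $\partial\Omega$ to constancy of $|\nabla v|$ there.
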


\begin{proof} If $\beta\in  [1,2)$, then clearly no constant $\Psi \left(
\mathbf{x};\beta \right)$ can satisfy (\ref{eq:3.4}). Therefore, it remains to investigate the case $\beta=2$. In such a case, if we assume that $\Psi \left(
\mathbf{x};2 \right)$ is constant on $\overline{
\Omega }$, we may obtain again that $v$ is a radial function, that is $v=v(r)$, and $\Omega$ is a disk. In radial coordinates,  equation (\ref{eq:1.3}) can be rewritten as
\begin{equation}
v_{rr}+\frac{1}{r}v_r(1+v_r^2)=1+v_r^2+\mu(1+v_r^2)^{3/2}. \label{eq:3.5}
\end{equation}
On the other hand, since $\Psi ( \mathbf{x};2)$ is constant, we
have that $\partial \Psi /\partial r=\Psi _{,k}u_{,k}=0$, so that
$u_{rr}=(1+u_{r}^{2})+\mu (1+u_{r}^{2})^{3/2}$ and (\ref{eq:3.5}) thus becomes
$$\frac{1}{r}u_{r}(1+u_{r}^{2}) =0,$$
which is impossible, since $u_{r}\neq 0$ for $r\neq 0$.
\end{proof}

We are now ready to prove Theorem \ref{t1}. The proof is obtained again by contradiction. Suppose that  the minimum of $\Psi ( \mathbf{x};\beta) $ is attained at the critical point $\mathbf{O}$ of $v$. We distinguish two cases.

\begin{enumerate}
\item Case $\beta \in (1,2]$. Using the fact that $v_{1}( \mathbf{O}) =v_{2}\left(
\mathbf{O}\right) =v_{12}( \mathbf{O}) =0$, we evaluate (\ref{eq:3.2}) and
(\ref{eq:3.3}) at the origin to find
$$\Psi _{1}(\mathbf{O};\beta )=\Psi _{2}(\mathbf{O};\beta )=0,$$
respectively
$$\begin{array}{l}
\Psi _{11}(\mathbf{O};\beta )=2\dfrac{v_{11}^{2}( \mathbf{O}) }{
1 +\mu }-\beta v_{11}( \mathbf{O}) ,   \\
\Psi _{12}( \mathbf{O};\beta) =0,   \\
\Psi _{22}( \mathbf{O};\beta) =2\dfrac{v_{22}^{2}}{1 +\mu }
( \mathbf{O}) -\beta v_{22}( \mathbf{O}) .
\end{array}$$
Since $\Psi (\mathbf{x};\beta )$ attains its minimum in $\mathbf{O}$, we
have \
\begin{equation}
\begin{split}
0\leq \Psi _{11}(\mathbf{O};\beta )& =v_{11}(\mathbf{O})\left(2\frac{v_{11}(\mathbf{O})
}{1 +\mu }-\beta \right),   \\
0\leq \Psi _{22}(\mathbf{O};\beta )& =v_{22}(\mathbf{O})\left(2\frac{v_{22}(\mathbf{O})
}{1 +\mu }-\beta \right)\geq 0.
\end{split}
 \label{eq:3.9}
\end{equation}
Using now   that $v_{ii}(\mathbf{O})>0$, for $i=1,2$, inequalities (\ref{eq:3.9}) imply
$$2\frac{v_{11}(\mathbf{O})}{1 +\mu }-\beta \geq 0,\qquad 2\frac{
v_{22}(\mathbf{O})}{1 +\mu }-\beta \geq 0.$$
Summing now these two last inequalities, we obtain
\begin{equation}
2\Delta v(\mathbf{O})\geq \left( 1 +\mu \right) \beta .
 \label{eq:3.11}
\end{equation}
On the other hand, evaluating equation \eqref{eq:1.3} at  $\mathbf{O}$, we find
\begin{equation}
\Delta v(\mathbf{O})=1 +\mu .   \label{eq:3.12}
\end{equation}
Inserting now (\ref{eq:3.12}) into (\ref{eq:3.11}) and using the fact that $1+\mu>0$, we conclude that $\beta \leq 1$,  which contradicts the assumption that $\beta >1$, so that the proof of Theorem \ref{t1} is achieved in this case.

\item Case $\beta =1$. The same continuity argument employed in the case of problem (\ref{eq:1.1})-(\ref{eq:1.2})  in the previous section  can be repeated here to show that  Theorem \ref{t1} also holds in this case.
\end{enumerate}

%%%%%%%%%%%%%%%
\section{Proof of Theorem 1.2}\label{se:4}
%%%%%%%%%%%%%%%%%

From Theorem 1.1 we know that $\Phi ( \mathbf{x};\beta) $ takes its
minimum value at some point $\mathbf{Q}_\beta\in \partial \Omega $. Here we emphasize the dependence of the points $\mathbf{Q}_\beta$ on the parameter $\beta$. This
implies that
\begin{equation}
\frac{2}{\alpha -1}\left( 1+| \nabla u| ^{2}\right) ^{
\frac{\alpha -1}{2}}-\beta u\geq \frac{2}{\alpha -1}\left( 1+q_{m}^{2}\right) ^{
\frac{\alpha -1}{2}},   \label{eq:4.1}
\end{equation}
where we remind that $q_{m}$ is the minimum value of $| \nabla u| $ on $
\partial \Omega $. Evaluating (\ref{eq:4.1}) at the unique minimal point of $u$, we find
$$-\beta u_{\min }\geq \frac{2}{\alpha -1}\left(\left( 1+q_{m}^{2}\right) ^{\frac{
\alpha -1}{2}}-1\right). $$
The left hand-side of this inequality is positive and attains its minimum when $\beta=1$, hence
\begin{equation}
- u_{\min }\geq \frac{2}{\alpha -1}\left(\left( 1+q_{m}^{2}\right) ^{\frac{
\alpha -1}{2}}-1\right).  \label{eq:4.3}
\end{equation}
 Next, we construct a lower bound for $q_{m}$ in terms of the curvature $\kappa(s)$ of $\partial
\Omega $.  Let $\mathbf{Q}=\mathbf{Q}_1$. Since $\Phi( \mathbf{x};1) $ takes its minimum value at $\mathbf{Q}$, we have $\partial \Phi ( \mathbf{x};1) /\partial \mathbf{n}\leq 0
$ at $\mathbf{Q}$, or equivalently,
\begin{equation}\label{otro}
2\left( 1+u_{n}^{2}\right) ^{\frac{\alpha -3}{2}}u_{n}u_{nn}-u_{n}\leq 0\
\text{ at }\mathbf{Q},
\end{equation}
where $u_{n}$ and $u_{nn}$ are the first and second outward normal
derivatives of $u$ on $\partial \Omega $. As $u<0$ in $\Omega$ and $u=0$ along $\partial\Omega$, then $u_n>0$ and $u_n=|\nabla u|$ on $\partial\Omega$. Thus the above inequality (\ref{otro}) becomes
\begin{equation}
2\left( 1+u_{n}^{2}\right) ^{\frac{\alpha -3}{2}} u_{nn} \leq 1\
\text{ at }\mathbf{Q}.   \label{eq:4.5}
\end{equation}
Now, since the boundary $\partial \Omega $ is smooth,  equation (\ref{eq:1.1})
   can be rewritten in normal coordinates along $\partial\Omega$ as
$$\frac{u_{nn}}{\left( 1+u_{n}^{2}\right) ^{3/2}}+\frac{\kappa u_{n}}{\left(
1+u_{n}^{2}\right) ^{1/2}}=\left( 1+u_{n}^{2}\right) ^{-\frac{\alpha }{2}}
\text{ on }\partial \Omega ,$$
or equivalently,
\begin{equation}
 u_{nn} + \kappa u_{n} (
1+u_{n}^{2} ) = ( 1+u_{n}^{2} ) ^{\frac{3-\alpha }{2}}
\text{ on }\partial \Omega.  \label{eq:4.7}
\end{equation}
Inserting   (\ref{eq:4.7}) into (\ref{eq:4.5}), we obtain
$$1\leq 2\kappa(\mathbf{Q})q_m(1+q_m^2)^{\frac{\alpha-1}{2}}\leq 2\kappa(\mathbf{Q})\left( 1+q_{m}^{2}\right) ^{\frac{\alpha +1}{2}}$$
where for the last inequality  we have used  that $2q_m\leq 1+q_m^2$. It then follows that
$$\frac{1}{\kappa(\mathbf{Q})} \leq
\left( 1+q_{m}^{2}\right) ^{\frac{\alpha +1}{2}}. $$
Hence
\begin{equation}
\left( 1+q_{m}^{2}\right) ^{\frac{\alpha -1}{2}}\geq \left( \frac{1}{\kappa(\mathbf{Q})}\right) ^{\frac{\alpha -1}{\alpha +1}}\geq  \left( \frac{1}{\kappa_{\max
}}\right) ^{\frac{\alpha -1}{\alpha +1}}, \label{eq:4.10}
\end{equation}
from which inequality (\ref{eq:1.9}) follows. Moreover, inserting  (\ref{eq:4.10}) into (\ref{eq:4.3}), we also obtain the inequality (\ref{eq:1.10}) and the proof of Theorem \ref{t2} is thus achieved.

%%%%%%%%%%%%%%%%%%%%
\section{Proof of Theorem \ref{t3}}\label{se:5}
%%%%%%%%%%%%%%%

The idea of proof is similar to the one already employed in the previous section to prove  Theorem \ref{t2}. From Theorem \ref{t1} we know that $\Psi ( \mathbf{x};1) $ takes its
minimum value at some point $\mathbf{Q}\in \partial \Omega $. This
implies that
\begin{equation}
 \ln \left( \frac{1+| \nabla v| ^{2}}{
\left(1 +\mu \sqrt{1+| \nabla v| ^{2}}\right) ^{2}}
\right) -v\geq \ln \left( \frac{1+q_{m}^{2}}{\left(
1 +\mu \sqrt{1+q_{m}^{2}}\right) ^{2}}\right).  \label{eq:5.1}
\end{equation}
 Evaluating now (\ref{eq:5.1}) at the unique minimal point of $v$, we obtain
\begin{equation}
-v_{\min }\geq  \ln \left( \frac{\left( 1+q_{m}^{2}\right)
 ( 1 +\mu  )^2 }{\left(1 +\mu \sqrt{1+q_{m}^{2}}\right)
^{2}}\right) . \label{eq:5.2}
\end{equation}
From  the facts that $\partial \Psi ( \mathbf{x};1) /\partial \mathbf{n}\leq 0
$ and $v_n>0$ on $\partial\Omega$, it follows that
\begin{equation}
\frac{2 v_{nn}}{\left( 1+v_{n}^{2}\right) (1 +\mu \sqrt{
1+v_{n}^{2}}) } \leq 1\ \text{ at }\mathbf{Q}.
 \label{eq:5.3}
\end{equation}
On the other hand, equation (\ref{eq:1.3}) can be rewritten in normal coordinates along $\partial\Omega$, as
$$\frac{v_{nn}}{\left( 1+v_{n}^{2}\right) ^{3/2}}+\frac{\kappa v_{n}}{\left(
1+v_{n}^{2}\right) ^{1/2}}=\frac{1 }{\sqrt{1+v_{n}^{2}}}+\mu, $$
or, equivalently,
\begin{equation}
v_{nn}=(1+v_n^2)(1-\kappa v_n)+\mu(1+v_n^2)^{3/2}, \label{eq:5.5}
\end{equation}
where $\kappa(s)$ is the curvature of $\partial \Omega$. Inserting now the value of $v_{nn}$ from (\ref{eq:5.5}) into (\ref{eq:5.3}),  we obtain after some simplifications
$$\frac{1}{\kappa(\mathbf{Q})}\leq \frac{2q_{m}}{1 +\mu \sqrt{1+q_{m}^{2}}}\leq  \frac{2q_{m}}{1 +\mu },  $$
from which inequality (\ref{eq:1.11}) follows. Finally, using (\ref{eq:1.11}) and the fact that the function $f(x)=x/(1+\mu x)$ is increasing, we easily deduce from (\ref{eq:5.2}) the desired inequality (\ref{eq:1.12}).

%%%%%%%%%%%%%%%%%%%%
\section{Some final remarks}\label{se:6}
%%%%%%%%%%%%%%%

For both problems (\ref{eq:1.1})-(\ref{eq:1.2}) and (\ref{eq:1.3})-(\ref{eq:1.4}), some maximum principles for $P$-functions have been already obtained by Payne and Philippin in \cite{PP79}. We can use them in what follows, to derive some upper bound estimates which complement the bounds given in Theorem \ref{t2} and \ref{t3}.

\subsection{An upper bound for $-u_{\min}$}

From \cite[Cor. 1]{PP79}, we know that the function $\Phi (\mathbf{x};2)$ takes its maximum value at the
(only) critical point of $u$. This implies
$$\frac{1}{\alpha -1}\left( 1+|\nabla u|^{2}\right) ^{\frac{\alpha -1}{2}}-%
\frac{1}{\alpha -1}\leq u-u_{\min }.   $$
Therefore, if $\alpha >1$, this inequality   leads to
\begin{equation}
|\nabla u|^{2}\leq \left( (\alpha -1)(u-u_{\min })+1\right) ^{\frac{2}{%
\alpha -1}}-1.  \label{eq:6.2}
\end{equation}
Next, we use inequality (\ref{eq:6.2}) to derive an upper bound for $-u_{\min }$. Let $%
P$ be a point where $u=u_{\min }$ and $Q$ be a point on $\partial \Omega $
nearest to $P$.  Let $r$ measure the distance from $P$ to $Q$ along the ray
connecting $P$ and $Q$. Clearly we have%
\begin{equation}
\frac{du}{dr}\leq \left\vert \nabla u\right\vert .  \label{eq:6.3}
\end{equation}%
Integrating now (\ref{eq:6.3}) from $P$ to $Q$ along the ray connecting $P$ to $Q$, and making use of (\ref{eq:6.2}), we obtain  %
\begin{equation}
I:= \int_{u_{\min }}^0  \frac{du}{\sqrt{\left( 1+\left(
\alpha -1\right) \left( u-u_{\min }\right) \right) ^{\frac{2}{\alpha -1}}-1}}%
\leq  \int_P^Qdr=\left\vert PQ\right\vert \leq d,
 \label{eq:6.4}
\end{equation}%
where $d$ is the radius of the largest ball inscribed in $\Omega $. Next,
using the substitution $v=\left( 1+\left( \alpha -1\right) \left( u-u_{\min
}\right) \right)^{-\frac{1}{\alpha -1}}$, we have%
\begin{eqnarray}
I &= & \int_{1}^{\left( 1-\left( \alpha -1\right) u_{\min }\right)^{-%
\frac{1}{\alpha -1}}}\frac{v^{1-\alpha }}{\sqrt{1-v^{2}}}dv  \geq  \int_{1}^{\left( 1-\left( \alpha -1\right) u_{\min }\right)^{-%
\frac{1}{\alpha -1}}}\frac{dv}{\sqrt{1-v^{2}}}  \label{6.51}\\
& = & \cos ^{-1}\left(\frac{1}{\left( 1-\left( \alpha -1\right) u_{\min }\right)
^{\frac{1}{\alpha -1}}}\right), \label{6.5}
\end{eqnarray}

where we have used the fact that $v\leq 1$ when $\alpha >1$,  to derive the
inequality in (\ref{6.51}). From (\ref{eq:6.4}) and (\ref{6.5}) we obtain
$$\cos ^{-1}\left(\frac{1}{\left( 1-\left( \alpha -1\right) u_{\min }\right)
^{\frac{1}{\alpha -1}}}\right)\leq d.$$
In order to solve this inequality for $u_{\min}$, we must require that $d<\pi/2$. Hence we conclude the following estimate.

\begin{theorem}\label{t4} Let $d$ be the radius of the largest ball inscribed in $\Omega $.
If $\alpha >1$ and $d<\pi/2$, then the solution $u$ to problem (\ref%
{eq:1.1})-(\ref{eq:1.2}) satisfies the inequality
$$-u_{\min }\leq \frac{1}{\alpha -1}\left( \left( \frac{1}{\cos (d)}\right)
^{\alpha -1}-1\right).$$
\end{theorem}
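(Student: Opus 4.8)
The plan is to replace the minimum principle of Theorem \ref{t1} by a \emph{maximum} principle for the same $P$-function $\Phi(\mathbf{x};\beta)$ evaluated at $\beta=2$, and then to convert the resulting pointwise gradient bound into a bound on $-u_{\min}$ by integrating along a ray. The entry point is the observation of Payne and Philippin \cite{PP79} that $\Phi(\mathbf{x};2)$ attains its \emph{maximum} at the unique interior critical point $\mathbf{O}$ of $u$, where $\nabla u=0$ and $u=u_{\min}$. Comparing the value of $\Phi(\mathbf{x};2)$ at a generic point with its value at $\mathbf{O}$, and then solving for $|\nabla u|^2$ using $\alpha>1$, I would obtain the pointwise estimate
\[
|\nabla u|^2 \le \bigl(1 + (\alpha-1)(u - u_{\min})\bigr)^{\frac{2}{\alpha-1}} - 1 .
\]

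Next I would use this gradient bound along a distinguished ray. Choosing a point $P$ realizing $u=u_{\min}$ and the boundary point $Q$ nearest to $P$, one has $du/dr \le |\nabla u|$ along the segment $PQ$; dividing by the square root above and integrating from $P$ (where $u=u_{\min}$) to $Q$ (where $u=0$) gives
\[
I := \int_{u_{\min}}^{0} \frac{du}{\sqrt{\bigl(1 + (\alpha-1)(u - u_{\min})\bigr)^{\frac{2}{\alpha-1}} - 1}} \le |PQ| \le d ,
\]
where $d$ is the radius of the largest ball inscribed in $\Omega$.

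The analytic core is to bound $I$ from below in closed form. The substitution $v = \bigl(1 + (\alpha-1)(u - u_{\min})\bigr)^{-\frac{1}{\alpha-1}}$ sends the range $u\in[u_{\min},0]$ to $v\in[w,1]$ with $w = \bigl(1 - (\alpha-1)u_{\min}\bigr)^{-\frac{1}{\alpha-1}} \in (0,1)$, and transforms $I$ into $\int_{w}^{1} v^{1-\alpha}(1 - v^2)^{-1/2}\,dv$. Since $\alpha>1$ forces $v^{1-\alpha}\ge 1$ on $[w,1]$, one gets $I \ge \int_w^1 (1-v^2)^{-1/2}\,dv = \arccos(w)$. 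Combining with $I\le d$ and inverting the (decreasing) arccosine — legitimate precisely because $d<\pi/2$ keeps us in the range where $\cos d>0$ — yields $w \ge \cos d$; solving $\bigl(1-(\alpha-1)u_{\min}\bigr)^{-1/(\alpha-1)} \ge \cos d$ for $u_{\min}$ then gives the stated bound.

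The steps I expect to require the most care are, first, the invocation of the maximum principle for $\Phi(\mathbf{x};2)$ from \cite{PP79} — the counterpart of Theorem \ref{t1}, for which $\beta=2$ makes the right-hand side of the identity (\ref{eq:2.11}) vanish, so that $\Phi(\mathbf{x};2)$ obeys a maximum principle despite the singularity of $W_k$ at the critical point — and second, the sign bookkeeping in the integral estimate: one must check that the substitution indeed lands in $(0,1]$, that the inequality $v^{1-\alpha}\ge 1$ points in the favorable direction, that the singularity of the integrand at $v=1$ is integrable, and that the final inversion of the cosine is valid only under the hypothesis $d<\pi/2$.
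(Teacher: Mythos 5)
Your proposal is correct and follows essentially the same route as the paper's own proof: the maximum principle of Payne--Philippin for $\Phi(\mathbf{x};2)$ at the unique critical point, the resulting gradient bound, integration along the ray from the minimum point to the nearest boundary point, the substitution $v=\left(1+(\alpha-1)(u-u_{\min})\right)^{-\frac{1}{\alpha-1}}$, and the estimate $v^{1-\alpha}\geq 1$ leading to the arccosine bound and the hypothesis $d<\pi/2$. All sign and range checks you flag are exactly the ones the paper handles, and your final inversion yields the stated inequality.
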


\subsection{An upper bound for $-v_{\min}$}

From \cite[Cor. 1]{PP79}, the function $\Psi (\mathbf{x};2)$ takes its maximum value at the
(only) critical point of $v$. This implies that
$$
\ln \left( \frac{1+\left\vert \nabla v\right\vert ^{2}}{\left( 1+\mu \sqrt{%
1+\left\vert \nabla v\right\vert ^{2}}\right) ^{2}}\right) -\ln \left( \frac{%
1}{\left( 1+\mu \right) ^{2}}\right) \leq 2v-2v_{\min } ,  $$
so that, after some manipulations, we find
\begin{equation}
\frac{\left( 1+\mu \right) ^{2}\left( 1+\left\vert \nabla v\right\vert
^{2}\right) }{\left( 1+\mu \sqrt{1+\left\vert \nabla v\right\vert ^{2}}%
\right) }\leq e^{2v-2v_{\min }}.   \label{eq:6.9}
\end{equation}

Since $\mu>0$, the left hand-side of (\ref{eq:6.9}) is obviously larger than $1+\left\vert
\nabla v\right\vert ^{2}$, so from (\ref{eq:6.9}) we are lead to the following
inequality%
\begin{equation}
\left\vert \nabla v\right\vert ^{2}\leq e^{2\left( v-v_{\min }\right) }.
  \label{eq:6.10}
\end{equation}%
Next, following the steps of the previous subsection, from (\ref{eq:6.10}), which represents exactly inequality (\ref{eq:6.9}) for $%
\mu =0$, one may obtain the
following result (see also \cite{PP79}, where the case $%
\mu =0$ was already investigated):

\begin{theorem}\label{t5} Let $d$ be the radius of the largest ball inscribed in $\Omega $.
If $d<\pi/2$, then the solution $v$  to problem (\ref{eq:1.3})-(%
\ref{eq:1.4}) satisfies the following inequality%
\begin{equation}
-v_{\min }\leq \ln \left( \frac{1}{\cos \left( d\right) }\right) .
 \label{eq:6.11}
\end{equation}%
\end{theorem}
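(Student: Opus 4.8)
The plan is to complete the analogy with the proof of Theorem~\ref{t4}, using the exponential gradient bound in place of the power-type one. The starting point is \cite[Cor.~1]{PP79}, which guarantees that $\Psi(\mathbf{x};2)$ attains its \emph{maximum} at the unique interior critical point $\mathbf{O}$ of $v$, where $\nabla v(\mathbf{O})=0$ and $v(\mathbf{O})=v_{\min}$. Writing $\Psi(\mathbf{x};2)\le\Psi(\mathbf{O};2)$ and exponentiating gives \eqref{eq:6.9}; since $\mu>0$, this reduces to the clean bound \eqref{eq:6.10}, which I would use in the sharpened form $|\nabla v|^{2}\le e^{2(v-v_{\min})}-1$ on $\overline{\Omega}$ (this is precisely the shape that \eqref{eq:6.9} takes when $\mu=0$, and is the form for which the subsequent quadrature produces the stated constant).

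Next I would carry out the geometric localisation exactly as in the proof of Theorem~\ref{t4}. Let $P$ be a point where $v=v_{\min}$ and $Q$ a nearest point of $\partial\Omega$, and parametrise the segment $PQ$ by arclength $r$, so that its length is at most the inradius $d$. Along this segment $v$ increases from $v_{\min}$ to $0$, and $dv/dr\le|\nabla v|$; inserting the gradient bound and separating variables yields
$$\int_{v_{\min}}^{0}\frac{dv}{\sqrt{e^{2(v-v_{\min})}-1}}\ \le\ \int_{P}^{Q}dr\ =\ |PQ|\ \le\ d.$$

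Finally I would evaluate the left-hand integral. With the substitution $w=v-v_{\min}$ followed by $e^{w}=\sec\phi$, the integrand collapses to $d\phi$ and the quadrature gives $\cos^{-1}\!\left(e^{v_{\min}}\right)\le d$. Because $d<\pi/2$ and $\cos$ is strictly decreasing on $[0,\pi/2)$, this inverts to $e^{v_{\min}}\ge\cos d$, that is $-v_{\min}\le\ln\bigl(1/\cos d\bigr)$, which is \eqref{eq:6.11}.

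The quadrature in the last step and the geometric localisation are routine, being identical to the translating-soliton ($\mu=0$) computation in \cite{PP79}; the hypothesis $d<\pi/2$ enters exactly to make the final inversion of $\cos^{-1}$ legitimate. I expect the only genuinely delicate point to be the first step, namely extracting the clean exponential gradient bound \eqref{eq:6.10} from the $\mu$-weighted inequality \eqref{eq:6.9}: one must use the positivity of $\mu$ to control the factor $\bigl(1+\mu\sqrt{1+|\nabla v|^{2}}\bigr)$ and recover the $\mu=0$ form of the bound, after which the argument becomes insensitive to the value of $\mu$.
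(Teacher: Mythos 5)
Your proposal follows the paper's own route step for step: \cite[Cor.~1]{PP79} applied to $\Psi(\mathbf{x};2)$, a pointwise gradient bound, integration along the ray from the minimum point to the nearest boundary point, and the quadrature. You are even right about a detail the paper glosses over: the quadrature produces the constant $\ln(1/\cos d)$ only from the sharpened bound $|\nabla v|^{2}\le e^{2(v-v_{\min})}-1$, not from \eqref{eq:6.10} as printed. But the step you flagged as delicate --- extracting that sharpened bound from the $P$-function inequality ``using the positivity of $\mu$'' --- is a genuine gap, and in fact it fails. Exponentiating $\Psi(\mathbf{x};2)\le\Psi(\mathbf{O};2)$ and taking square roots gives, with $s=\sqrt{1+|\nabla v|^{2}}\ge 1$,
\begin{equation*}
\frac{(1+\mu)\,s}{1+\mu s}\;\le\; e^{\,v-v_{\min}}
\end{equation*}
(this is \eqref{eq:6.9}, whose printed denominator is missing a square), whereas the bound you need is exactly $s\le e^{\,v-v_{\min}}$. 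Since $\frac{(1+\mu)s}{1+\mu s}\le s$ for every $s\ge1$, strictly when $s>1$, the inequality you have is \emph{weaker} than the one you want; the comparison you would have to make, $s\le\frac{(1+\mu)s}{1+\mu s}$, is equivalent to $s\le1$, i.e.\ it holds only at the critical point. Positivity of $\mu$ works against you, not for you: the left-hand side above is bounded by $(1+\mu)/\mu$, so wherever $e^{v-v_{\min}}\ge(1+\mu)/\mu$ the $P$-function inequality carries no information about $|\nabla v|$ at all, and no algebraic manipulation can turn it into an exponential gradient bound.

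Moreover, this is not a reparable technicality. The one-dimensional solution of \eqref{eq:1.3}, $v=v(x_{1})$ with $v''=(1+v'^{2})+\mu(1+v'^{2})^{3/2}$ and $v(\pm d)=0$, makes $\Psi(\cdot\,;2)$ exactly constant (so it satisfies the $P$-function inequality with equality), and its minimum-to-boundary segment has length exactly $d$; yet for it one computes $e^{-v_{\min}}=\frac{1+\mu}{\cos\phi_{b}+\mu}$ and $d=\int_{0}^{\phi_{b}}\frac{\cos\phi}{\cos\phi+\mu}\,d\phi$, where $\tan\phi_{b}$ is the boundary slope, and a Taylor expansion gives $(1+\mu)\cos d-(\cos\phi_{b}+\mu)=\frac{\mu}{1+\mu}\,\frac{\phi_{b}^{2}}{2}+O(\phi_{b}^{4})>0$, i.e.\ $-v_{\min}>\ln(1/\cos d)$ for every $\mu>0$ and $\phi_{b}$ small. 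Hence no pointwise gradient bound deducible from the $P$-function inequality, integrated along a ray of length at most $d$, can yield \eqref{eq:6.11}; for $\mu>0$ this method can only give a weaker, $\mu$-dependent estimate (consistent with the paper's closing remark that equality in Theorem \ref{t5} requires $\mu=0$). You should also know that the paper's own proof commits precisely the error in question: it asserts that the left-hand side of \eqref{eq:6.9} is ``obviously larger than $1+|\nabla v|^{2}$'', which is false once the missing square in the denominator is restored --- it is in fact smaller wherever $\nabla v\neq0$. So your reconstruction is faithful to the published argument, but the step you called delicate is not merely delicate: as written, it is wrong.
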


\subsection{Extensions and optimality}

We conclude this paper with the following two remarks about the extensions to higher dimensions and the optimality of the bounds found in this paper.

1. The most important ingredient in the proof of Theorem 1.1 is the result about the uniqueness of the critical point of solutions. An extension of our idea of proof to higher dimension doesn't work, since in a higher dimension we will have to deal with nodal hypersurfaces instead of nodal lines. One may eventually think at using an alternative proof, based on a stronger result, if true, which says that the solutions to our problems might have convex level sets. In such a case, G.A. Philippin and A. Safoui have proved in \cite{PS04} that equality sign in (17) can be replaced with the appropriate inequality sign. Unfortunately, as proved by X.-J. Wang in \cite{Wa14}, this convexity result fails to be true in some particular cases, such as $\alpha=0$ in (1) or $\mu =0$ in (3).

2. As for the optimality of our bounds, the equality sign is obtained in our bound estimates from Theorem 1.2 and Theorem 1.3 when the corresponding P-functions are identically constant. However, in Lemmas 2.4 and 3.2 it was already shown that this thing is impossible. Therefore, the bound estimates (9)-(12) are not optimal. As for the bound estimates from Theorems 6.1 and 6.2, the dimension of the space doesn't play a role in our computations, so these results still remain true in higher dimension. Moreover, the equality sign in these estimates holds in the limit as $\Omega$ degenerates into a strip region of width $2d$, while $\mu $ should also be equal to zero in the case of the estimate found in Theorem 6.2.

%%%%%%%%%%%%%%%

\end{document}